\newcommand\CC{\mathbb{C}}
\newcommand\QQ{\mathbb{Q}}
\newcommand\RR{\mathbb{R}}
\newcommand\TT{\mathbb{T}}
\newcommand\ZZ{\mathbb{Z}}
\newcommand\cI{\mathcal{I}}
\newcommand\cLL{\mathcal{L}}
\newcommand\cP{\mathcal{P}}
\newcommand\cT{\mathcal{T}}
\newcommand\Sp{\text{Sp}}
\newtheorem{theorem}{Theorem}[section]
\newtheorem{lemma}[theorem]{Lemma}
\newtheorem{proposition}{Proposition}
\newtheorem{corollary}{Corollary}
\newtheorem{lemma*}{Lemma}
\theoremstyle{definition}
\newtheorem{notation*}{Notation}
\newtheorem{question}{Question}
\title{Finiteness Properties of the Johnson Subgroups}
\author{Kevin Kordek}
\date{}
\begin{document}
\maketitle
\begin{abstract}
The main goal of this note is to provide evidence that the first rational homology of the Johnson subgroup $K_{g,1}$ of the mapping class group of a genus g surface with one marked point is finite-dimensional. Building on work of Dimca-Papadima \cite{dimcapapa12}, we use symplectic representation theory to show that, for all $g > 3$, the completion of $H_1(K_{g,1},\mathbb{Q})$ with respect to the augmentation ideal in the rational group algebra of $\mathbb{Z}^{2g}$ is finite-dimensional. We also show that the terms of the Johnson filtration of the mapping class group have infinite-dimensional rational homology in some degrees in almost all genera, generalizing a result of Akita.
\end{abstract}
\section{Introduction}
\indent Let $\pi$ denote the fundamental group of a closed orientable surface $S_g$ of genus $g\geq 2$ and let $\Gamma_{g,r}$ denote the mapping class group of $S_g$ with $r$ marked points, where $r=0\ \text{or}\ 1$. 

The \emph{$n$th Johnson subgroup} $K_{g,1}(n)$ of $\Gamma_{g,1}$ and the \emph{$n$th outer Johnson subgroup $K_g(n)$} of $\Gamma_g:=\Gamma_{g,0}$ are defined, respectively, to be the kernels of the natural maps 
\begin{equation*}
\Gamma_{g,1}\rightarrow \text{Aut}(\pi/\pi^{(n+1)})\ \ \ \ \ \ \ \ \Gamma_g\rightarrow \text{Out}(\pi/\pi^{(n+1)}).
\end{equation*}
Here $\pi^{(n)}$ denotes the $n$th term of the lower central series of $\pi$. The classical Torelli groups $T_{g,1}$ and $T_g$ are recovered by taking $n = 1$, and the original Johnson subgroups $K_{g,1}$ and $K_g$ are recovered by taking $n=2$. 
\\\indent
The finiteness properties of the Johnson subgroups are poorly understood. For example, it is not known whether these are finitely generated for even a single $n\geq 2$. In \cite{dimcapapa12}, Dimca-Papadima showed that $H_1(K_g,\CC)$ is a finite-dimensional vector space as long as $g\geq 4$.  It is currently unknown whether $H_1(K_3,\QQ)$ is finite-dimensional. On the other hand, in \cite{akita97} Akita showed that $H_{\bullet}(K_g,\QQ)$ and $H_{\bullet}(K_{g,1},\QQ)$ are infinite-dimensional when $g\geq 7$, so $K_g$ and $K_{g,1}$ must have some infinite-rank homology in these cases. Mess has shown in \cite{mess1992} that $K_2 = T_2$ is a free group of countably infinite rank, implying that $H_1(K_2,\QQ)$ is infinite-dimensional. It is currently unknown whether $H_1(K_{g,1},\QQ)$ is finite-dimensional when $g\geq 3$.

The main goal of this note is to provide evidence that $H_1(K_{g,1},\QQ)$ is finite-dimensional. Recall that if a finite-dimensional $k$-vector space $V$ is also a module over a commutative noetherian $k$-algebra $A$, then the completion of $V$ with respect to any ideal $I\subset A$ is a finite-dimensional $k$-vector space. 
Let $H = \pi^{ab}$. Then $H_1(K_{g,1},\QQ)$ is a $\QQ H$-module\footnote{This will be shown in Section 4.}. Let $J$ denote the augmentation ideal of $\QQ H$.
\begin{theorem}\label{finite}
For each $g\geq 4$ the $J$-adic completion $H_1(K_{g,1},\QQ)^{\wedge}$ is finite-dimensional.
\end{theorem}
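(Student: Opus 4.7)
The plan is to reduce to Dimca-Papadima's theorem via a Birman-type short exact sequence restricted to Johnson kernels, and then to handle the remaining piece by a symplectic-representation-theoretic analysis of the closed-surface Alexander module.

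First I would establish the short exact sequence
\begin{equation*}
1 \to [\pi, \pi] \to K_{g,1} \to K_g \to 1
\end{equation*}
by intersecting the Birman sequence $1 \to \pi \to \Gamma_{g,1} \to \Gamma_g \to 1$ with the Johnson kernels. An inner automorphism $c_x$ of $\pi$ acts trivially on $\pi/\pi^{(3)}$ iff $x$ lies in the center of this $2$-step nilpotent group, which is $[\pi,\pi]/\pi^{(3)}$; hence $\pi \cap K_{g,1} = [\pi,\pi]$. Surjectivity of $K_{g,1} \to K_g$ follows because any lift $\tilde\phi \in \Gamma_{g,1}$ of $\phi \in K_g$ acts on $\pi/\pi^{(3)}$ as some $c_{\bar\alpha}$, and $\tilde\phi \circ c_\alpha^{-1}$ lies in $K_{g,1}$.

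Next I would apply the Hochschild-Serre five-term sequence in rational group homology:
\begin{equation*}
H_0(K_g, H_1([\pi,\pi], \QQ)) \to H_1(K_{g,1}, \QQ) \to H_1(K_g, \QQ) \to 0,
\end{equation*}
and verify $\QQ H$-equivariance by tracking the conjugation action of $\pi \subset \Gamma_{g,1}$ through the Hochschild-Serre spectral sequence. The crucial observation is that $H$ acts trivially on $H_1(K_g, \QQ)$: for $x \in \pi$ and $\alpha \in K_{g,1}$, the commutator $[x, \alpha] \in \Gamma_{g,1}$ is the point-push along $\alpha(x) \cdot x^{-1}$, which lies in $[\pi,\pi]$ because $\alpha$ acts trivially on $H = \pi^{\mathrm{ab}}$; thus $\pi$ acts trivially on $K_g = K_{g,1}/[\pi,\pi]$. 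Consequently the $J$-adic completion of $H_1(K_g, \QQ)$ coincides with itself, which is finite-dimensional for $g \geq 4$ by Dimca-Papadima. It therefore suffices to prove that $(B_{K_g})^{\wedge}_J$ is finite-dimensional, where $B := H_1([\pi,\pi], \QQ)$ is the rational Alexander module of $\pi$.

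The module $B$ is finitely generated over $\QQ H$ (via Fox calculus applied to the one-relator presentation of $\pi$), and $K_g$ acts on $B$ by $\QQ H$-linear automorphisms since $K_g \subset T_g$ acts trivially on $H$. Hence $B_{K_g}$ is a finitely generated $\QQ H$-module, and its $J$-adic completion is finite-dimensional iff the associated graded $\mathrm{gr}_J B_{K_g}$ vanishes in sufficiently large degrees. Each graded piece is a finite-dimensional $\mathrm{Sp}(2g,\QQ)$-representation via the outer action of $\Gamma_g/T_g \cong \mathrm{Sp}(2g,\ZZ)$, and decomposes into Schur functors of the standard $2g$-dimensional symplectic representation.

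The main obstacle will be the final vanishing statement: showing that the $\mathrm{Sp}(2g, \QQ)$-isotypic multiplicities in $\mathrm{gr}_J B_{K_g}$ vanish past some finite degree. This is precisely where the bound $g > 3$ should enter, in direct analogy with the representation-theoretic rigidity that powers Dimca-Papadima's finiteness theorem for $K_g$. Making this parallel effective — identifying $B_{K_g}$ concretely enough to control its $\mathrm{Sp}$-decomposition and to apply the relevant vanishing — is the heart of the argument.
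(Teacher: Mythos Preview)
Your global architecture matches the paper exactly: restrict the Birman sequence to the Johnson kernels, take the five-term sequence, observe $\QQ H$-equivariance with $H$ acting trivially on $H_1(K_g,\QQ)$, invoke Dimca--Papadima, and reduce to finite-dimensionality of the $J$-adic completion of the $K_{g,1}$-coinvariants of the Alexander module $A=H_1(\pi^{(2)},\QQ)$. So far, so good.

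The gap is in your plan for the ``main obstacle,'' and it contains a real misconception. You expect the vanishing of $\mathrm{gr}_J(A_{K_{g,1}})$ in high degree to require $g\geq 4$ via some analogue of Dimca--Papadima's rigidity. In fact the paper proves this vanishing for \emph{all} $g\geq 2$; the bound $g\geq 4$ enters \emph{only} through the appeal to Dimca--Papadima for $H_1(K_g,\QQ)$, which you have already used. The mechanism you are missing is this: the paper computes that each graded piece $\mathrm{gr}_n A$ is not merely some finite $\mathrm{Sp}$-module but is the \emph{irreducible} module $V(n\lambda_1+\lambda_2)$. Consequently the surjection $\mathrm{gr}_n A \to \mathrm{gr}_n(A_{K_{g,1}})$ is, by Schur, either an isomorphism or zero. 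One then exhibits a single explicit element $w=(a_1-1)(a_2-1)[a_1,b_1]$, shows by a direct Dehn-twist calculation along a genus-one separating curve that $w\in J_{K_{g,1}}A$, and uses torsion-freeness of $A$ over $\QQ H$ together with the Krull intersection theorem to see that $w$ has a well-defined nonzero class in some $\mathrm{gr}_{N_g}A$. That class lies in the kernel of $\psi_{N_g}$, so irreducibility forces $\mathrm{gr}_{N_g}(A_{K_{g,1}})=0$, and since $\mathrm{gr}_\bullet A$ is generated in degree $0$ the filtration stabilizes. Your proposal of tracking ``isotypic multiplicities'' and looking for a genus-dependent rigidity theorem would not find this; the whole point is that irreducibility collapses the problem to producing one nonzero element killed by the coinvariants map.
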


Theorem \ref{finite} does not automatically imply that $H_1(K_{g,1},\QQ)$ is finite-dimensional, though it is consistent with that hypothesis. 

We now provide an overview of the paper. In Section 2 we recall the basics of Torelli groups and Johnson subgroups, including the higher Johnson homomorphisms defined by Morita \cite{morita93}. In Section 3 we review a result of Papadima-Suciu \cite{papasuciu10} that will provide us with a Birman-type exact sequence for the Johnson subgroups that will be used throughout the paper. 

Section 4 is dedicated to the structure of $H_1(\pi^{(2)},\QQ)$ and its $K_{g,1}$-coinvariants. The main result of this section is that the $J$-adic filtration on $H_1(\pi^{(2)},\QQ)_{K_{g,1}}$ stabilizes. This is the main technical result needed for the proof of Theorem \ref{finite}. Along the way, we will study the graded quotients of $H_1(\pi^{(2)},\QQ)$ associated with the $J$-adic filtration, each of which has a natural $\Sp_g(\ZZ)$-module structure. Following the conventions of \cite{hain97}, let $V(\lambda)$ denote the irreducible representation of $\Sp_g(\QQ)$ with highest weight $\lambda$.
\begin{proposition}\label{proposition2}
For each $n\geq 0$ there is a natural $\Sp_g(\ZZ)$-equivariant isomorphism
\begin{equation*}
V(n\lambda_1+\lambda_2)\longrightarrow \frac{J^nH_1(\pi^{(2)},\QQ)}{J^{n+1}H_1(\pi^{(2)},\QQ)}
\end{equation*}

\end{proposition}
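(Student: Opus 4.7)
The plan is to compute the $J$-adic associated graded of $H_1(\pi^{(2)},\QQ)$ explicitly by Fox calculus and identify each graded piece via the representation theory of $\Sp_g$. First I would invoke Shapiro's lemma to identify $H_1(\pi^{(2)},\QQ)\cong H_1(\pi;\QQ[H])$, where $\QQ[H]$ is the coefficient system induced by the abelianization $\pi\twoheadrightarrow H$. The standard $2$-dimensional CW structure on $S_g = K(\pi,1)$, together with Fox calculus applied to the relation $\prod_i[a_i,b_i]$, yields the chain complex
\begin{equation*}
0 \to \QQ[H] \xrightarrow{d_2} H\otimes\QQ[H] \xrightarrow{d_1} \QQ[H] \to \QQ \to 0,
\end{equation*}
with $d_1(h\otimes f) = (h-1)f$. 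Since $H\cong\ZZ^{2g}$ acts nontrivially on $\QQ[H]$, Poincar\'e duality for the surface group gives $H_2(\pi;\QQ[H]) = \QQ[H]^H = 0$, so $d_2$ is injective and $H_1(\pi^{(2)},\QQ) \cong \ker d_1/\mathrm{im}\,d_2$.

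Next I would pass to the $J$-adic associated graded of each term, using $\mathrm{gr}_J\QQ[H]\cong \text{Sym}(H)$. After an appropriate regrading so that the differentials have degree zero, the $n$-th graded piece of the associated graded complex becomes the Koszul-type sequence
\begin{equation*}
\text{Sym}^n H \xrightarrow{\omega\,\cdot} H\otimes\text{Sym}^{n+1} H \xrightarrow{\text{mult}} \text{Sym}^{n+2} H,
\end{equation*}
where the symplectic class $\omega \in H \otimes H$ arises (up to sign) as the leading term of $d_2(1)$ modulo $J^2$. Since the complex consists of finitely generated free modules over the noetherian ring $\QQ[H]$, an Artin--Rees argument identifies the middle homology of this sequence with $\mathrm{gr}_J^n H_1(\pi^{(2)},\QQ)$.

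To identify this middle homology, I use representation theory. The kernel of the multiplication $H\otimes\text{Sym}^{n+1}H\to\text{Sym}^{n+2}H$ is the Schur module $S^{(n+1,1)}H$, whose branching to $\Sp_g$ via Littlewood's formula is
\begin{equation*}
S^{(n+1,1)}H\,\big|_{\Sp_g} \cong V(n\lambda_1 + \lambda_2) \oplus V(n\lambda_1).
\end{equation*}
Since $\omega$ is $\Sp_g$-invariant, the map $\omega\,\cdot:\text{Sym}^n H = V(n\lambda_1)\to S^{(n+1,1)}H$ is $\Sp_g$-equivariant, and it is nonzero (as one checks on $a_1^n$), so by Schur's lemma it exhausts the $V(n\lambda_1)$ summand. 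The quotient is $V(n\lambda_1+\lambda_2)$, and the resulting isomorphism is $\Sp_g(\ZZ)$-equivariant because every step of the construction is functorial in $H$.

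The main technical obstacle I anticipate is justifying that $\mathrm{gr}_J H_1(\pi^{(2)},\QQ)$ coincides with the cohomology of the associated graded complex: the $J$-adic filtration on the subquotient $H_1$ need not agree with the one induced from the ambient module $H\otimes\QQ[H]$, and reconciling them requires Artin--Rees, or equivalently an argument after passing to $J$-adic completions. The branching computation itself is standard but could alternatively be replaced by an ad hoc highest-weight analysis inside $S^{(n+1,1)}H$.
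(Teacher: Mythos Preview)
Your route is genuinely different from the paper's. The paper never passes to the associated graded of the Fox complex. Instead it uses the tautological multiplication map $\text{gr}_J^n(\QQ H)\otimes (A/JA)\to \text{gr}_J^n A$, which is automatically surjective, identifies the source with $\text{Sym}^nH_\QQ\otimes\Lambda^2H_\QQ$, writes down explicit highest-weight vectors for every irreducible constituent, and kills all of them except $a_1^n\otimes a_1\wedge a_2$ by hand using the Jacobi identity and the vanishing of $\theta$ in $A$. The Fox presentation is used only to prove that $A$ is torsion-free over $\QQ H$, so that Krull intersection gives $\text{gr}_J^n A\neq 0$; Schur's lemma then finishes. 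Your Koszul/Littlewood computation is a cleaner, more conceptual replacement for the paper's table of highest-weight vectors: the Jacobi relation is exactly the image of $\Lambda^3H\otimes\text{Sym}^{n-1}H$ in the Koszul complex, and the $\theta$-relation is the image of $\omega\cdot$, so you have in effect computed $(\text{Sym}^nH\otimes\Lambda^2H)/(\text{Jacobi}+\theta)\cong V(n\lambda_1+\lambda_2)$ without writing down any vectors.

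The gap is in the step you yourself flag. Artin--Rees (or completion) only tells you that the $J$-adic filtration on $A$ and the filtration induced from $H\otimes\QQ[H]$ define the same topology; it does \emph{not} identify their associated gradeds. So ``an Artin--Rees argument identifies the middle homology with $\text{gr}_J^nA$'' is not correct as stated: what you actually get from the spectral sequence of the filtered complex is $\text{gr}_F^nA\cong V(n\lambda_1+\lambda_2)$ for the \emph{induced} filtration $F$, and you still owe an argument that $F^nA=J^nA$. The cleanest fix is to abandon the spectral-sequence packaging and instead feed your Koszul computation into the paper's framework: the tautological surjection $\text{Sym}^nH\otimes\Lambda^2_0H\twoheadrightarrow\text{gr}_J^nA$ kills Jacobi and $\theta$, hence factors through your $V(n\lambda_1+\lambda_2)$, and then torsion-freeness plus Krull intersection (which you would still need to supply) gives the nonvanishing that makes this an isomorphism.
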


Section 5 is of a slightly different flavor. Here we investigate the infinite-dimensional homology of the Johnson subgroups and show that, in sufficiently large genus, the higher Johnson subgroups have infinite-dimensional homology in some degrees. 
\begin{proposition}\label{proposition1}
For each $n\geq 2$  and $g\geq 7$, the homology spaces $H_{\bullet}(K_{g,1}(n),\QQ)$ and $H_{\bullet}(K_g(n),\QQ)$ are infinite-dimensional.
\end{proposition}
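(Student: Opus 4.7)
The plan is to reduce to Akita's theorem via a Hochschild--Serre spectral sequence argument. For each $n \geq 2$, the containment $K_{g,1}(n) \subset K_{g,1}(2) = K_{g,1}$ gives a short exact sequence
\begin{equation*}
1 \longrightarrow K_{g,1}(n) \longrightarrow K_{g,1}(2) \longrightarrow Q_n \longrightarrow 1,
\end{equation*}
where $Q_n := K_{g,1}(2)/K_{g,1}(n)$. The strategy is to show that $Q_n$ is a finitely generated nilpotent group; once this is in hand, the infinite-dimensional rational homology of $K_{g,1}(2)$ from \cite{akita97} will force the same for $K_{g,1}(n)$ by a standard spectral sequence argument.

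For the key step, I would observe that $Q_n$ embeds into the kernel $I_n$ of the natural map $\text{Aut}(\pi/\pi^{(n+1)}) \to \text{Aut}(\pi/\pi^{(3)})$. The group $I_n$ is itself finitely generated nilpotent: it carries a natural central filtration whose terms are the kernels of the projections $\text{Aut}(\pi/\pi^{(n+1)}) \to \text{Aut}(\pi/\pi^{(k+1)})$ for $3 \leq k \leq n$, and the higher Johnson homomorphisms $\tau_k$ embed the successive quotients into finitely generated free abelian groups. Since subgroups of finitely generated nilpotent groups are themselves finitely generated nilpotent, $Q_n$ inherits this property. Consequently, $Q_n$ has finite rational cohomological dimension, and $H_p(Q_n, V)$ is finite-dimensional whenever $V$ is a finite-dimensional rational $Q_n$-module.

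I would then apply the Hochschild--Serre spectral sequence
\begin{equation*}
E^2_{p,q} = H_p(Q_n, H_q(K_{g,1}(n), \QQ)) \Longrightarrow H_{p+q}(K_{g,1}(2), \QQ)
\end{equation*}
and argue by contradiction: if every $H_q(K_{g,1}(n), \QQ)$ were finite-dimensional, then every $E^2_{p,q}$ would be finite-dimensional, with only finitely many $(p,q)$ of fixed total degree $m$ contributing, forcing $H_m(K_{g,1}(2), \QQ)$ to be finite-dimensional for each $m$. This would contradict \cite{akita97}. The argument for $K_g(n)$ is identical once one replaces $K_{g,1}$ by $K_g$ and works inside $\text{Out}(\pi/\pi^{(n+1)})$ instead of $\text{Aut}(\pi/\pi^{(n+1)})$; the analogous quotient is again finitely generated nilpotent for the same reason. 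The main obstacle is the verification that $Q_n$ is finitely generated nilpotent; beyond this point the argument is essentially formal.
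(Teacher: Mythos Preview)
Your argument is correct and is at heart the same as the paper's: both reduce to Akita's theorem via a Serre spectral sequence, using that the relevant quotient has finite-type rational homology. The only difference is in packaging. The paper proceeds inductively one step at a time, using the extensions
\[
1 \longrightarrow K_{g,1}(n+1) \longrightarrow K_{g,1}(n) \xrightarrow{\ \tau_{g,1}(n)\ } V_{\ZZ} \longrightarrow 1
\]
with $V_{\ZZ}$ free abelian of finite rank, so that the base of the associated fibration is a compact torus; it then invokes Akita's lemma (a fibration over a finite CW complex whose fiber has finite-dimensional rational homology has total space with finite-dimensional rational homology) and inducts. You instead collapse the whole tower into a single extension with quotient $Q_n$, and your verification that $Q_n$ is finitely generated nilpotent is exactly the observation that this tower has finitely many steps with finitely generated abelian graded pieces. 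Your version is marginally more direct, while the paper's is marginally more elementary in that it only needs tori to be finite complexes rather than the homological finiteness properties of finitely generated nilpotent groups.
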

 
This is a direct generalization of Akita's result. We will conclude the paper by providing a condition under which Proposition \ref{proposition1} could be sharpened considerably.
\begin{theorem}\label{theorem1}
If $H_1(\pi^{(2)},\QQ)_{K_{g,1}}$ is infinite-dimensional then for each $n\geq 2$ the direct sum
$H_2(K_g(n),\QQ)\oplus H_1(K_{g,1}(n),\QQ)$ is infinite-dimensional. 
\end{theorem}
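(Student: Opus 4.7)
The plan is to apply the Papadima-Suciu Birman-type short exact sequence recalled in Section 3,
\begin{equation*}
1 \to \pi^{(n)} \to K_{g,1}(n) \to K_g(n) \to 1,
\end{equation*}
and extract its Lyndon-Hochschild-Serre five-term exact sequence in rational homology,
\begin{equation*}
H_2(K_g(n),\QQ) \longrightarrow H_1(\pi^{(n)},\QQ)_{K_g(n)} \longrightarrow H_1(K_{g,1}(n),\QQ).
\end{equation*}
Exactness at the middle term forces
\begin{equation*}
\dim_{\QQ} H_1(\pi^{(n)},\QQ)_{K_g(n)} \;\leq\; \dim_{\QQ} H_2(K_g(n),\QQ) + \dim_{\QQ} H_1(K_{g,1}(n),\QQ),
\end{equation*}
so the whole problem reduces to showing that $H_1(\pi^{(n)},\QQ)_{K_g(n)}$ is infinite-dimensional for every $n\geq 2$.

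The case $n=2$ is essentially formal. Since $\pi^{(2)} \trianglelefteq K_{g,1}$ acts on itself by inner automorphisms, the $K_{g,1}$-action on $H_1(\pi^{(2)},\QQ)$ factors through $K_{g,1}/\pi^{(2)}$. In particular $H_1(\pi^{(2)},\QQ)_{K_g(2)} = H_1(\pi^{(2)},\QQ)_{K_{g,1}}$, which is infinite-dimensional by hypothesis.

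For $n>2$, the strategy is to compare with the $n=2$ case via the inclusion $\pi^{(n)}\hookrightarrow \pi^{(2)}$. The Stallings five-term sequence for
\begin{equation*}
1 \to \pi^{(n)} \to \pi^{(2)} \to \pi^{(2)}/\pi^{(n)} \to 1
\end{equation*}
identifies the cokernel of $H_1(\pi^{(n)},\QQ)\to H_1(\pi^{(2)},\QQ)$ with $H_1(\pi^{(2)}/\pi^{(n)},\QQ)$. This cokernel is finite-dimensional, because $\pi^{(2)}/\pi^{(n)}$ is a subgroup of the finitely generated nilpotent group $\pi/\pi^{(n)}$ and is therefore itself finitely generated nilpotent. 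Both $\pi^{(n)}$ and $\pi^{(2)}$ are characteristic in $\pi$, hence normal in $K_{g,1}$, so $K_{g,1}(n) \subset K_{g,1}$ acts on each by conjugation and the inclusion is equivariant. Moreover $K_{g,1}(n) \cap \pi^{(2)} = \pi^{(n)}$, and the elements of $K_{g,1}(n)$ lying in $\pi^{(2)}$ act by inner automorphism on $\pi^{(2)}$, hence trivially on $H_1(\pi^{(2)},\QQ)$. Thus both actions descend to $K_g(n) = K_{g,1}(n)/\pi^{(n)}$, and taking $K_g(n)$-coinvariants (a right-exact functor) gives a map $H_1(\pi^{(n)},\QQ)_{K_g(n)} \to H_1(\pi^{(2)},\QQ)_{K_g(n)}$ whose cokernel is a quotient of $H_1(\pi^{(2)}/\pi^{(n)},\QQ)$, hence finite-dimensional.

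To close the loop, one notes that $H_1(\pi^{(2)},\QQ)_{K_g(n)} = H_1(\pi^{(2)},\QQ)_{K_{g,1}(n)}$ and that the inclusion $K_{g,1}(n)\subset K_{g,1}$ produces a surjection $H_1(\pi^{(2)},\QQ)_{K_{g,1}(n)} \twoheadrightarrow H_1(\pi^{(2)},\QQ)_{K_{g,1}}$. Composing yields a homomorphism
\begin{equation*}
H_1(\pi^{(n)},\QQ)_{K_g(n)} \longrightarrow H_1(\pi^{(2)},\QQ)_{K_{g,1}}
\end{equation*}
with finite-dimensional cokernel, and since the target is infinite-dimensional by hypothesis, the source must be as well. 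I expect the main obstacle to be purely bookkeeping: $K_g(n)$ carries no a priori action on $H_1(\pi^{(2)},\QQ)$, so one must work throughout with the $K_{g,1}(n)$-action and keep straight that it factors through $K_g(n)$ and embeds into the action of $K_{g,1}/\pi^{(2)}$ so that all the comparisons between coinvariants line up correctly.
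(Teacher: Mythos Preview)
Your argument is correct and follows essentially the same route as the paper: reduce via the five-term sequence of the Birman-type extension $1\to\pi^{(n)}\to K_{g,1}(n)\to K_g(n)\to 1$ to showing that $H_1(\pi^{(n)},\QQ)_{K_{g,1}(n)}$ is infinite-dimensional, and then compare this with the hypothesis on $H_1(\pi^{(2)},\QQ)_{K_{g,1}}$ via finite-dimensional cokernels coming from the lower central series. The only organizational difference is that the paper carries out the comparison inductively (its Lemma~\ref{lemma3.2} uses the extensions $1\to\pi^{(n+1)}\to\pi^{(n)}\to\cLL_n(\pi)\to 1$ one step at a time to show $H_1(\pi^{(n)},\QQ)_{K_{g,1}}$ is infinite-dimensional, then surjects from $K_{g,1}(n)$-coinvariants), whereas you do it in a single shot using $1\to\pi^{(n)}\to\pi^{(2)}\to\pi^{(2)}/\pi^{(n)}\to 1$ and the fact that subgroups of finitely generated nilpotent groups are finitely generated; the two are equivalent and your version is marginally cleaner.
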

\begin{corollary}\label{corollary1}
If $H_1(\pi^{(2)},\QQ)_{K_{g,1}}$ is infinite-dimensional, then 
for each $n\geq 2$ either $K_g(n)$ is not finitely presented or $K_{g,1}(n)$ is not finitely generated.
\end{corollary}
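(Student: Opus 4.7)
The plan is to prove the contrapositive, viewing the corollary as a direct consequence of Theorem \ref{theorem1} combined with two standard finiteness facts from group homology. So I would assume that both $K_g(n)$ is finitely presented and $K_{g,1}(n)$ is finitely generated, and derive that the direct sum $H_2(K_g(n),\QQ)\oplus H_1(K_{g,1}(n),\QQ)$ is finite-dimensional, which contradicts Theorem \ref{theorem1} under the hypothesis that $H_1(\pi^{(2)},\QQ)_{K_{g,1}}$ is infinite-dimensional.

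For the $H_1$ term, I would invoke the standard fact that for any finitely generated group $G$, the abelianization $G^{ab}$ is a finitely generated abelian group, hence $H_1(G,\QQ) = G^{ab}\otimes_\ZZ\QQ$ is a finite-dimensional $\QQ$-vector space. Applying this to $G = K_{g,1}(n)$ yields $\dim_\QQ H_1(K_{g,1}(n),\QQ) < \infty$.

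For the $H_2$ term, I would use the fact that a finitely presented group $G$ admits a $K(G,1)$ whose $2$-skeleton is finite: starting from a finite presentation, build the presentation $2$-complex and attach cells of dimension $\geq 3$ to kill the higher homotopy groups. In the resulting cellular chain complex the module $C_2$ is finitely generated abelian, so the submodule $Z_2$ of $2$-cycles is likewise finitely generated, and $H_2(G,\ZZ)=Z_2/B_2$ is a quotient of $Z_2$. Thus $H_2(G,\ZZ)$ is finitely generated, so $H_2(G,\QQ)$ is finite-dimensional. Applying this to $G = K_g(n)$ gives $\dim_\QQ H_2(K_g(n),\QQ) < \infty$.

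Combining the two bounds, the direct sum $H_2(K_g(n),\QQ)\oplus H_1(K_{g,1}(n),\QQ)$ is finite-dimensional, contradicting Theorem \ref{theorem1}. There is no serious obstacle here, since all the technical work has already been done in Theorem \ref{theorem1}; this corollary is essentially a translation of that theorem through the elementary homological finiteness dictionary.
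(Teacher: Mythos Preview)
Your proposal is correct and matches the paper's intended deduction: the paper does not give a separate proof of this corollary, but places it immediately after Theorem \ref{theorem1}, from which it follows by exactly the contrapositive argument you outline using the standard facts that finitely generated groups have finite-dimensional $H_1(\,\cdot\,,\QQ)$ and finitely presented groups have finite-dimensional $H_2(\,\cdot\,,\QQ)$.
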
 

\subsection{Acknowledgements}
I would like to thank Andrew Putman for many helpful discussions. Thanks are also due to the referee, whose suggestions helped to improve the exposition in this paper. 
\section{Preliminaries}\label{section2}

\subsection{Mapping Class Groups and Torelli Groups}
Let $S_{g,n}$ denote a closed orientable surface of genus $g$ with $n$ marked points, where $n = 0\ \text{or}\ 1$. The \emph{mapping class group} $\Gamma_{g,n}$ is the group of isotopy classes of orientation preserving diffeomorphisms of $S_g$ that fix the marked point. The \emph{Torelli group} $T_{g,n}$ is the kernel of the natural representation $\Gamma_{g,n}\rightarrow \Sp(H)\cong \Sp_g(\ZZ)$, where $H = H_1(S_g,\ZZ)$. We will omit the decoration $n$ when it is equal to $0$.

Johnson proved in \cite{johnson1} that $T_g$ is generated by finitely many bounding pair maps when $g\geq 3$, and this implies that $T_{g,n}$ is finitely generated for all pairs $(g,n)$ as long as $g\geq 3$. Mess \cite{mess1992} proved that $T_2$ is a free group of countably infinite rank, and showed that $T_2$ is freely generated by a set of Dehn twists on separating curves. It is currently unknown whether $T_g$ is finitely presented for even a single $g\geq 3$. %
\subsection{Johnson Subgroups}
In \cite{morita93}, Morita introduced a family of $\Gamma_{g,1}$-equivariant homomorphisms 
\begin{equation*}
\tau_{g,1}(n): K_{g,1}(n)\rightarrow \text{Hom}(H, \cLL_{n+1}(\pi))
\end{equation*}
defined by
\begin{equation}\label{johnsonhom}
\varphi\rightarrow \{\overline{x}\rightarrow \overline{\varphi(x)x^{-1}}\},
\end{equation}
where $\cLL_k(\pi) = \pi^{(k)}/\pi^{(k+1)}$. The homomorphism $\tau_{g,1}(n)$ is the \emph{$n$th Johnson homomorphism}.

Because the graded Lie algebra associated to the lower central series of $\pi$ is center-free (see Proposition \ref{prop3} below), there is a natural $\Sp_g(\ZZ)$-equivariant inclusion $\cLL_n(\pi)\hookrightarrow \text{Hom}(H, \cLL_{n+1}(\pi))$.  The \emph{$n$th outer Johnson homomorphism} $\tau_g(n)$ is a map
$$\tau_g(n): K_g(n)\rightarrow \text{Hom}(H, \cLL_{n+1}(\pi))/\cLL_n(\pi)$$
defined by a formula analogous to (\ref{johnsonhom}) above.
It can be show that the Johnson homomorphisms fit into exact sequences
\begin{equation*}
\xymatrix{
1\ar[r]&K_{g,1}(n+1)\ar[r] &K_{g,1}(n)\ar[r]^{\tau_{g,1}(n)\hspace{.4in}} & \text{Hom}(H, \cLL_{n+1}(\pi))\\
1\ar[r]&K_g(n+1)\ar[r] &K_g(n)\ar[r]^{\tau_g(n)\hspace{.5in}\ \ } & \text{Hom}(H, \cLL_{n+1}(\pi))/\cLL_n(\pi).
}
\end{equation*}

The classical Johnson homomorphism $\tau_{g,1}(1)$ was originally defined by Johnson in \cite{johnson1}. Johnson showed that the image of $\tau_{g,1}(1)$ is isomorphic to $\Lambda^3H$ and that the image of $\tau_g$ is isomorphic to $\Lambda^3H/\theta\wedge H$, where $\theta\in \Lambda^2 H$ is the symplectic form.  It is, in general, very difficult to compute the image of the higher Johnson homomorphisms\footnote{Hain \cite{hain97}, Morita \cite{morita97} and others have determined $\text{Im}(\tau_{g,1}(n))\otimes \QQ$ up to $n=6$}. The kernels $K_{g,1}$ and $K_g$ of $\tau_{g,1}(1)$ and $\tau_g(1)$, respectively, are the classical Johnson subgroups. Little is known about the structure of $K_{g,1}$ and $K_g$, aside from a handful of basic results. In \cite{johnson2}, Johnson proved the following beautiful theorem. 
\begin{theorem}[Johnson]\label{johnson}
For all $g\geq 2$, the Johnson subgroup $K_g$ is generated by Dehn twists on separating simple closed curves.
\end{theorem}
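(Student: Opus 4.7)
The plan is to prove $K_g = SK_g$, where $SK_g \leq \Gamma_g$ denotes the subgroup generated by Dehn twists on separating simple closed curves. The inclusion $SK_g \subseteq K_g$ is the easy direction: for a separating simple closed curve $c$, the class $[c] = 0$ in $H$, so a direct computation shows that for any loop $\gamma \in \pi$, the element $T_c(\gamma)\gamma^{-1}$ is a product of conjugates of $c^{\pm 1}$ with zero total exponent. Since $c \in \pi^{(2)}$ already, any commutator of $c$ with an element of $\pi$ lies in $\pi^{(3)}$; combined with the vanishing total exponent, this forces $T_c(\gamma)\gamma^{-1} \in \pi^{(3)}$, so $T_c$ acts trivially on $\pi/\pi^{(3)}$ and lies in $K_g$.

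For the reverse inclusion I would separate the cases $g = 2$ and $g \geq 3$. When $g = 2$, a direct computation in a symplectic basis shows that the Lefschetz-type map $H \to \Lambda^3 H$, $v \mapsto \theta \wedge v$, is an isomorphism, so $\Lambda^3 H/(\theta \wedge H) = 0$. Consequently $K_2 = T_2$, which Mess \cite{mess1992} showed is freely generated by Dehn twists on separating curves. When $g \geq 3$, Johnson's generation theorem allows us to write every element of $T_g$ as a product of bounding pair maps $T_a T_b^{-1}$, and a direct computation in $\pi/\pi^{(3)}$ yields
\[
\tau_g(1)(T_a T_b^{-1}) \equiv [a] \wedge [\Sigma] \pmod{\theta \wedge H},
\]
where $\Sigma$ is one of the two subsurfaces bounded by $a \cup b$ and $[\Sigma] = \sum a_i \wedge b_i \in \Lambda^2 H$ is assembled from a symplectic basis $\{a_i, b_i\}$ of $H_1(\Sigma)$. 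Varying $(a,b)$ shows that $\tau_g(1)$ surjects onto $\Lambda^3 H/(\theta \wedge H)$.

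The crux is then the containment $\ker \tau_g(1) \subseteq SK_g$. I would approach this via the following program: (i) define elementary moves on bounding pairs — sliding $a$ or $b$ over a handle in the complement — and use the lantern and chain relations in $\Gamma_g$ to show that each elementary move alters $T_a T_b^{-1}$ by a product of separating twists; (ii) show that any two bounding pairs with the same image under $\tau_g(1)$ are connected by a sequence of such moves; (iii) given $\varphi \in \ker \tau_g(1)$ written as a product of bounding pair maps, apply (i)--(ii) to successively reduce the word into $SK_g$. The main obstacle is step (ii), an essentially combinatorial connectivity statement requiring a delicate case analysis of the possible configurations of pairs of curves on $S_g$ with prescribed homology class. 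Johnson's original proof in \cite{johnson2} avoids constructing such a graph explicitly and instead proceeds via a direct analysis of abelian cycles in $T_g$ and their images modulo $SK_g$.
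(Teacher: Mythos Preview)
The paper does not contain a proof of this theorem. It is stated as a background result attributed to Johnson \cite{johnson2}, immediately following the sentence ``In \cite{johnson2}, Johnson proved the following beautiful theorem,'' and the paper then moves on without further argument. There is therefore nothing in the paper to compare your proposal against.

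As for the proposal itself: your outline is a reasonable high-level sketch, and you are candid that step (ii) --- the connectivity statement for bounding pairs with prescribed $\tau_g(1)$-image --- is the real difficulty and is not actually carried out. That is indeed where the substance lies, so what you have written is a plan rather than a proof. One minor historical point: your $g=2$ case invokes Mess \cite{mess1992} to conclude that $T_2$ is generated by separating twists, but Mess's paper postdates Johnson's; Johnson's own argument in \cite{johnson2} handles all $g\geq 2$ uniformly and does not rely on Mess. If you want a self-contained treatment you should either supply the $g=2$ case directly or simply cite \cite{johnson2} as the paper does.
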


It is a difficult open problem to determine whether or not $K_g$ is finitely generated and, in light of \cite{dimcapapa12}, this cannot be determined by considering the dimension of $H_1(K_g,\QQ)$ alone, as long as $g\geq 4$. 
\section{Birman Sequence for the Johnson Subgroups}\label{section 3}
One of the key results needed throughout the paper is a Birman-type exact sequence that relates the $n$th Johnson subgroup to the $n$th outer Johnson subgroup. There is a forgetful map $T_{g,1}\rightarrow T_g$ obtained by forgetting the marked point.  This is a surjective homomorphism.
The kernel can be identified with $\pi$ via the push map $\cP: \pi\rightarrow T_{g,1}$ which sends an element $x\in \pi$ to the isotopy class of the diffeomorphism of $S_{g,1}$ obtained by dragging the marked point along $x$. For any $\gamma\in \pi$, the push map satisfies $\cP(x)(\gamma) = x\gamma x^{-1}$.
\begin{theorem}[Birman]
There is an exact sequence
\begin{equation*}
1\rightarrow \pi\xrightarrow{\cP} T_{g,1}\rightarrow T_g\rightarrow 1.
\end{equation*}
\end{theorem}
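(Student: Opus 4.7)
The plan is to obtain this sequence by restricting the classical Birman exact sequence
\begin{equation*}
1\rightarrow \pi \rightarrow \Gamma_{g,1}\rightarrow \Gamma_g\rightarrow 1
\end{equation*}
(valid for $g\geq 2$) to the corresponding Torelli subgroups. The key compatibility I would use is that the first homology $H_1(S_{g,1},\ZZ)$ is naturally identified with $H_1(S_g,\ZZ)=H$, so the symplectic representations of $\Gamma_{g,1}$ and $\Gamma_g$ are intertwined by the forgetful map.

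First I would check that the push map $\cP$ lands in $T_{g,1}$. By definition, $\cP(x)$ acts on $\pi$ by conjugation by $x$; since conjugation is trivial on the abelianization $H=\pi^{ab}$, we get $\cP(\pi)\subseteq T_{g,1}$. Next I would verify surjectivity of the restricted forgetful map $T_{g,1}\to T_g$. Given $f\in T_g$, lift it to some $\tilde f\in \Gamma_{g,1}$ using the classical Birman sequence. Because the action of $\tilde f$ on $H_1(S_{g,1},\ZZ)$ agrees with the action of $f$ on $H$ under the identification above, $\tilde f$ acts trivially on $H$, and hence $\tilde f\in T_{g,1}$.

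For exactness in the middle, the kernel of $T_{g,1}\to T_g$ is exactly $T_{g,1}\cap \ker(\Gamma_{g,1}\to \Gamma_g)=T_{g,1}\cap \cP(\pi)$, which equals $\cP(\pi)$ by the first step. Finally, the injectivity of $\cP:\pi\to T_{g,1}$ is inherited from the injectivity of $\cP:\pi\to \Gamma_{g,1}$ in the classical Birman sequence (equivalently, from centerlessness of $\pi$ for $g\geq 2$). Assembling these observations yields the desired sequence.

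There is no serious obstacle here once the classical Birman sequence is in hand; the entire content of the restricted statement is the compatibility between the forgetful map and the symplectic representations, together with the observation that the push map automatically lands in Torelli.
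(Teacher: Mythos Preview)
Your argument is correct and is the standard derivation of the Torelli version of the Birman sequence from the classical one. However, the paper does not actually supply a proof of this statement: it is recorded as a classical theorem attributed to Birman and invoked without argument, so there is no ``paper's own proof'' to compare against. Your sketch is exactly the kind of justification one would give if asked to fill in the details.
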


We now review some of the results from \cite{papasuciu10}. Let $G$ be a group and let $G^{(n)}$ denote the $n$th term of the lower central series of $G$. The Torelli group $\cT_G$ of a group $G$ is defined to be the kernel of the natural map $\text{Aut}(G)\rightarrow \text{Aut}(G/G^{(2)})$. The outer Torelli group $\widetilde{\cT_G}$ is defined to be the kernel of the map $\text{Out}(G)\rightarrow \text{Out}(G/G^{(2)})$. The Johnson-Andreadakis filtration $F^s\cT_G$ of $\cT_G$ is defined by
\begin{equation*}
F^s\cT_G = \text{ker}\left(\text{Aut}(G)\rightarrow \text{Aut}(G/G^{(s+1)})\right).
\end{equation*}
The Johnson-Andreadakis filtration $F^s\widetilde{\cT_G}$ of $\widetilde{\cT_G}$ is defined completely analogously.

In \cite{papasuciu10} Papadima-Suciu have worked out the relation between the terms of the filtrations on $\cT_G$ and $\widetilde{\cT_G}$, at least in certain cases. 

Recall that a group $G$ is said to be residually nilpotent if $\bigcap_{n=1}^{\infty} G^{(n)} = 1$.
\begin{theorem}[Papadima-Suciu, \cite{papasuciu10}]\label{papasuciu}
Suppose that $G$ is a residually nilpotent group whose associated graded Lie algebra $\emph{gr}\ G$ has trivial center. Then for each $n\geq 1$ there is an exact sequence
\begin{equation}\label{5}
1\rightarrow G^{(n)}\rightarrow F^n\cT_G\rightarrow F^n\widetilde{\cT_G}\rightarrow 1.
\end{equation}
\end{theorem}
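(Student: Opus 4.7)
The plan is to start from the standard short exact sequence $1\to \text{Inn}(G)\to \text{Aut}(G)\to \text{Out}(G)\to 1$ and restrict the outer two terms to the Johnson-Andreadakis filtrations. A preliminary observation is that $G$ itself must be centerless: a hypothetical nontrivial central element would lie in some $G^{(k)}\setminus G^{(k+1)}$ by residual nilpotence, and its class in $\cLL_k$ would commute with every $\bar x\in\cLL_1$; since $\text{gr}\ G$ is generated as a Lie algebra by $\cLL_1$, that class would lie in the center of $\text{gr}\ G$, contradicting the hypothesis. Hence the conjugation map $\iota: G\hookrightarrow\text{Aut}(G)$ is injective, and the problem reduces to identifying its pullback $\iota^{-1}(F^n\cT_G)$ with $G^{(n)}$ and showing that $F^n\cT_G$ surjects onto $F^n\widetilde{\cT_G}$.

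The main technical step is the identification $\iota^{-1}(F^n\cT_G)=G^{(n)}$. Unwinding definitions, $\iota(g)\in F^n\cT_G$ is equivalent to $[g,x]\in G^{(n+1)}$ for every $x\in G$. One direction is immediate from the defining property of the lower central series. For the converse, suppose $g\notin G^{(n)}$; residual nilpotence gives a well-defined integer $k<n$ with $g\in G^{(k)}\setminus G^{(k+1)}$, so the class $\bar g\in\cLL_k$ is nonzero. The commutator hypothesis then forces $[\bar g,\bar x]=0$ in $\cLL_{k+1}$ for every $\bar x\in\cLL_1$, and the generation of $\text{gr}\ G$ by $\cLL_1$ promotes this to $\bar g$ being central in $\text{gr}\ G$, contradicting the centerless assumption. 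Once this identification is in place, the kernel of $F^n\cT_G\to F^n\widetilde{\cT_G}$ is precisely $\iota(G)\cap F^n\cT_G = \iota(G^{(n)})$, giving the desired exactness in the middle.

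Surjectivity of $F^n\cT_G\to F^n\widetilde{\cT_G}$ should follow from a standard lifting argument: given $[\phi]\in F^n\widetilde{\cT_G}$ and any lift $\tilde\phi\in\text{Aut}(G)$, the induced automorphism of $G/G^{(n+1)}$ is inner by the definition of $F^n\widetilde{\cT_G}$, hence equals conjugation by some $\bar h\in G/G^{(n+1)}$; choosing any lift $h\in G$, the product $\text{conj}_{h^{-1}}\circ\tilde\phi$ acts trivially on $G/G^{(n+1)}$ and so lies in $F^n\cT_G$ and represents $[\phi]$. The main obstacle is the commutator analysis in the middle paragraph, where both hypotheses are genuinely needed: residual nilpotence supplies the well-defined depth $k$ of a purportedly bad element, while the centerless hypothesis on $\text{gr}\ G$ is what converts commutator vanishing modulo higher filtration into an actual contradiction.
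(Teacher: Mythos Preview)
The paper does not supply its own proof of this statement: Theorem~\ref{papasuciu} is quoted directly from \cite{papasuciu10} and used as a black box, so there is nothing to compare your argument against in this manuscript.

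That said, your argument is correct and is essentially the standard one. A couple of minor remarks. First, in the middle paragraph you invoke residual nilpotence to produce the depth $k<n$ of the bad element $g$; but once you have assumed $g\notin G^{(n)}$, the maximum $k$ with $g\in G^{(k)}$ exists automatically and is at most $n-1$, no residual nilpotence required. Residual nilpotence is genuinely needed only in the preliminary step showing $Z(G)=1$, where a nontrivial central element has no a~priori bound on its depth. Second, when you promote $[\bar g,\bar x]=0$ for all $\bar x\in\cLL_1$ to ``$\bar g$ is central in $\text{gr}\,G$'', you are implicitly using that $\text{gr}\,G$ is generated as a Lie algebra by $\cLL_1$ together with the Jacobi identity and an induction on degree; this is routine but worth saying explicitly. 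The surjectivity argument is fine as written.
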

The next proposition consists of standard facts which are recorded here for the reader's convenience. 
\begin{proposition}\label{prop3}
The fundamental group $\pi$ of a closed surface of genus $g\geq 2$ is residually nilpotent, and the 
graded Lie algebra associated to its lower central series has trivial center.
\end{proposition}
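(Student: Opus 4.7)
The plan is to deduce both assertions from Labute's explicit determination of the associated graded Lie algebra of a surface group. Present $\pi$ as the one-relator group $F/\langle\langle r \rangle\rangle$, with $F$ free on a symplectic basis $a_1,b_1,\ldots,a_g,b_g$ and $r = \prod_{i=1}^g [a_i,b_i]$. The image of $r$ in the associated graded Lie algebra $\mathrm{gr}(F)$ is the symplectic form $\omega = \sum_i [a_i,b_i]$, sitting in degree $2$ of the free Lie algebra $L(H)$ on $H$.

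For residual nilpotence, I would invoke Labute's theorem on one-relator groups whose relator has leading term that is not a proper power. Its key hypothesis---that $L(H)/(\omega)$ be torsion-free---is a short direct check, since $\omega$ has coefficient $1$ on $[a_1,b_1]$ in a graded basis for $L_2(H)$ and so spans a pure subgroup. Labute's conclusion is then that $\mathrm{gr}(\pi) \cong L(H)/(\omega)$ as graded Lie algebras and, simultaneously, that $\pi$ is residually nilpotent.

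For center-freeness of $\mathfrak{g} := L(H)/(\omega) \otimes \QQ$, I would work rationally and reduce, using the grading, to exhibiting for each nonzero homogeneous $\xi \in \mathfrak{g}_n$ some $\eta \in \mathfrak{g}$ with $[\xi,\eta] \neq 0$. In degree $1$ one has $\mathfrak{g}_1 = H \otimes \QQ$, and for any nonzero $\xi$ a brief symplectic-basis computation produces $\eta \in \mathfrak{g}_1$ with $[\xi,\eta] \notin \QQ\cdot \omega$ inside $L_2(H) \otimes \QQ$, hence $[\xi,\eta] \neq 0$ in $\mathfrak{g}_2$. For $n \geq 2$, I would use the structural fact that $(\omega)$ is generated in degree $2$ as an ideal of $L(H) \otimes \QQ$, so that no bracket between a degree-$n$ element and a suitable degree-$1$ element can be killed on passing to $\mathfrak{g}$; this is a consequence of the quadratic (Koszul) description of the Malcev Lie algebra of a closed surface group that appears throughout the work of Hain and of Papadima--Suciu.

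The main obstacle is making the higher-degree part of the center-freeness argument rigorous without reproducing the full PBW-type analysis. Since the proposition is flagged as a collection of standard facts, I expect the proof in the paper to proceed largely by citation: to Labute for residual nilpotence and for the computation of $\mathrm{gr}(\pi)$, and to Hain (or Papadima--Suciu) for the triviality of the center of $\mathfrak{g}$.
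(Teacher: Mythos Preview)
Your overall expectation is right: the paper proceeds almost entirely by citation. But the specific references and the logical flow differ from yours in two notable ways.

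For residual nilpotence, the paper cites Baumslag \cite{baumslag} (surface groups are residually free, hence residually torsion-free nilpotent), not Labute. Your attribution to Labute is slightly off: Labute's 1970 paper determines the structure of the lower central series quotients of a one-relator group but does not by itself establish residual nilpotence; that is an independent input. So your plan is correct in spirit but mislabels the source.

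For center-freeness, the paper does not attempt any direct PBW-style computation. It cites Asada--Kaneko \cite{asadakaneko} for the fact that the associated graded of the Malcev Lie algebra $\mathfrak{p}$ of $\pi$ has trivial center, and then uses two further facts---that $\mathrm{gr}\,\pi$ has torsion-free graded pieces (Labute) and that $\mathrm{gr}^{\text{LCS}}\mathfrak{p}\cong \mathrm{gr}\,\pi\otimes\RR$ (Hain)---to deduce that $\mathrm{gr}\,\pi$ itself has trivial center over $\ZZ$. This last step is worth noting: knowing the center vanishes rationally only tells you the integral center is torsion, and one needs the torsion-freeness of the graded pieces to conclude. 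Your proposal works only over $\QQ$ and does not address this passage to $\ZZ$.

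Your sketched direct argument for center-freeness in degrees $n\geq 2$ is, as you yourself flag, not a proof: the assertion that ``no bracket between a degree-$n$ element and a suitable degree-$1$ element can be killed on passing to $\mathfrak{g}$'' is exactly what needs to be shown, and the fact that $(\omega)$ is generated in degree $2$ does not by itself rule out such cancellation. The Asada--Kaneko citation is the missing ingredient.
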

\begin{proof}
The first claim is a well-known result from combinatorial group theory and can be found in \cite{baumslag}. The second claim follows from the fact, proven in \cite{asadakaneko}, that the associated graded Lie algebra $\text{gr}^{\text{LCS}} \mathfrak{p}$ of the Malcev Lie algebra $\mathfrak{p}$ of $\pi$ has trivial center. This, combined with the fact the lower central 
series quotients of $\pi$ are torsion-free \cite{labute} and that
$\text{gr}^{\text{LCS}} \mathfrak{p}\cong \text{gr}\ \pi\otimes \RR$ \cite{hain97}, implies that $\text{gr}\ \pi$ has trivial center.
\end{proof}

The extended mapping class group $\Gamma^{\pm}_{g,1}$ is defined to be the group of isotopy classes of all diffeomorphisms of $S_g$ that fix the marked point (not just the orientation-preserving ones). The extended mapping class group $\Gamma_g^{\pm}$ is defined completely analogously. 
\begin{theorem}[Dehn-Nielsen-Baer, c.f. \cite{farbmargalit11}]\label{auts}
The natural maps 
\begin{equation*}
\Gamma^{\pm}_{g,1}\rightarrow \emph{Aut}(\pi)\ \ \ \ \ \ \ \ \ \Gamma^{\pm}_g\rightarrow \emph{Out}(\pi)
\end{equation*}
are isomorphisms.
\end{theorem}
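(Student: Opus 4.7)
The plan is to prove this classical result by establishing surjectivity and injectivity separately for each of the two natural maps. I would treat the outer version first and then adapt the argument to the pointed case.

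For surjectivity of $\Gamma^{\pm}_g \to \text{Out}(\pi)$, the goal is to realize any $\phi \in \text{Aut}(\pi)$ by a self-homeomorphism of $S_g$. Fix the universal cover $\tilde{S}_g = \HH^2$ with $\pi$ acting by deck transformations. Since $\pi$ is a uniform lattice in $\text{Isom}(\HH^2)$, the automorphism $\phi$ induces a quasi-isometry of $\pi$ (in the word metric) and hence of $\HH^2$, which extends continuously to a homeomorphism of the Gromov boundary $\partial \HH^2 \cong S^1$. Coning off by hyperbolic geodesics (or via barycentric coordinates on the Poincaré disk) produces a $\phi$-equivariant self-homeomorphism $\tilde{f}: \HH^2 \to \HH^2$, which descends to a homeomorphism of $S_g$ inducing $\phi$; whether $\tilde{f}$ preserves orientation is determined by the induced map on $S^1$.

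For injectivity, I would invoke two classical ingredients. First, because $S_g$ is a $K(\pi,1)$ for $g \geq 1$, there is a bijection between $\text{Out}(\pi)$ and the set of homotopy classes of self-homotopy-equivalences of $S_g$; in particular, any homeomorphism inducing the trivial outer automorphism is homotopic to $\text{id}_{S_g}$. Second, Baer's theorem asserts that homotopic homeomorphisms of a closed orientable surface of positive genus are isotopic. Combining the two, such a homeomorphism lies in the identity component of the homeomorphism group, establishing injectivity.

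The pointed version $\Gamma^{\pm}_{g,1} \to \text{Aut}(\pi)$ is handled analogously, now using pointed homotopy equivalences and ambient isotopies preserving the marked point; the basepoint removes the ambiguity by inner automorphisms. The main obstacle is the surjectivity step: passing from an abstract algebraic automorphism to a genuine homeomorphism requires the boundary-extension machinery above (or an equivalent combinatorial construction via coherent actions on curve systems). Once this is in place, the injectivity is a direct consequence of the aspherical nature of $S_g$ combined with Baer's theorem.
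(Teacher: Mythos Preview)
Your sketch is essentially the standard proof of the Dehn--Nielsen--Baer theorem as presented in Farb--Margalit, and the outline is correct: surjectivity via boundary extension of a $\phi$-equivariant quasi-isometry of $\HH^2$, and injectivity via the $K(\pi,1)$ property together with Baer's theorem that homotopic homeomorphisms of a closed surface of positive genus are isotopic.

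However, you should be aware that the paper does \emph{not} prove this statement at all. It is quoted with the attribution ``c.f.\ \cite{farbmargalit11}'' and used as a black box; no argument is given or expected. So there is nothing to compare your approach against within the paper itself. If the goal is to supply a proof where the paper gives none, your outline is on the right track, though as written it is only a roadmap: the ``coning off by hyperbolic geodesics'' step to get an equivariant homeomorphism of $\HH^2$ from a boundary homeomorphism requires care (one typically uses the Douady--Earle barycentric extension or an explicit combinatorial construction), and the pointed isotopy version of Baer's theorem (that a homeomorphism fixing a point and pointed-homotopic to the identity is isotopic to the identity rel that point) should be invoked explicitly rather than subsumed under ``analogously.''
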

Theorem \ref{auts} implies that the Torelli group and outer Torelli group of $\pi$ coincide with $T_{g,1}$ and $T_g$, respectively. 
Combined with Theorem \ref{papasuciu} we easily obtain the following.
\begin{corollary}\label{corollarydimca}
For each $n\geq 1$ there is an exact sequence
\begin{equation*}\label{extension}
1\rightarrow \pi^{(n)}\xrightarrow{\cP} K_{g,1}(n)\rightarrow K_g(n)\rightarrow 1
\end{equation*}
\end{corollary}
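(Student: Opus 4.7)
The plan is to combine Theorem \ref{papasuciu} (Papadima-Suciu) with Proposition \ref{prop3} and Theorem \ref{auts} (Dehn-Nielsen-Baer), which is the reason these have just been assembled. I would apply Papadima-Suciu's exact sequence to the group $G = \pi$. Proposition \ref{prop3} verifies the hypotheses: $\pi$ is residually nilpotent and its associated graded Lie algebra has trivial center. This immediately yields, for each $n\geq 1$, an exact sequence
\begin{equation*}
1\rightarrow \pi^{(n)}\rightarrow F^n\cT_{\pi}\rightarrow F^n\widetilde{\cT_{\pi}}\rightarrow 1.
\end{equation*}

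The next step is to identify $F^n\cT_{\pi}$ with $K_{g,1}(n)$ and $F^n\widetilde{\cT_{\pi}}$ with $K_g(n)$. Dehn-Nielsen-Baer gives isomorphisms $\Gamma^{\pm}_{g,1}\cong \emph{Aut}(\pi)$ and $\Gamma^{\pm}_g\cong \emph{Out}(\pi)$. Under these identifications, $F^n\cT_{\pi}$ is by definition the kernel of $\emph{Aut}(\pi)\rightarrow \emph{Aut}(\pi/\pi^{(n+1)})$, which sits inside $\Gamma^{\pm}_{g,1}$. To see that this kernel actually lies in the (orientation-preserving) mapping class group $\Gamma_{g,1}$, I would use the fact that any orientation-reversing diffeomorphism acts non-trivially already on $H = \pi/\pi^{(2)}$, so it cannot lie in $F^n\cT_{\pi}$ for any $n\geq 1$. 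Hence $F^n\cT_{\pi} = K_{g,1}(n)$, and the analogous argument gives $F^n\widetilde{\cT_{\pi}} = K_g(n)$.

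The last thing to check is that the left-hand map $\pi^{(n)}\rightarrow F^n\cT_{\pi} = K_{g,1}(n)$ supplied by Theorem \ref{papasuciu} agrees with the restriction of the push map $\cP$. In Papadima-Suciu's construction this map sends $x\in G^{(n)}$ to the inner automorphism $\gamma\mapsto x\gamma x^{-1}$; under Dehn-Nielsen-Baer this is precisely the push map, since Birman's theorem identifies the kernel of the forgetful map $T_{g,1}\rightarrow T_g$ with $\pi$ via $\cP$, and $\cP(x)$ acts on $\pi$ by conjugation by $x$.

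The main obstacle, to the extent that there is one, is the identification $F^n\cT_\pi = K_{g,1}(n)$: one must be careful that Papadima-Suciu's filtration is defined on the full automorphism group $\emph{Aut}(\pi)$ (corresponding to the \emph{extended} mapping class group), whereas the Johnson subgroups $K_{g,1}(n)$ are defined inside the orientation-preserving $\Gamma_{g,1}$. Once the orientation-reversing observation above is made, however, everything fits together and Corollary \ref{corollarydimca} follows immediately.
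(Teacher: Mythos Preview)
Your proposal is correct and follows exactly the approach the paper takes: the paper simply remarks that Theorem \ref{auts} identifies the Torelli and outer Torelli groups of $\pi$ with $T_{g,1}$ and $T_g$, and then says the corollary follows ``easily'' by combining this with Theorem \ref{papasuciu}. You have in fact supplied more detail than the paper does---the observation that orientation-reversing classes already act nontrivially on $H$, and the identification of the Papadima--Suciu inclusion with the push map---both of which the paper leaves implicit.
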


It is easily checked that the extension in Corollary \ref{extension} is $\pi$-equivariant with respect to the conjugation action on $\pi^{(n)}$ and $K_{g,1}(n)$ and the trivial action on $K_g(n)$.
\section{The Structure of the $K_{g,1}$-Coinvariants}
In this section we determine the $\Sp_g(\ZZ)$-module structure on the graded quotients of $H_1(\pi^{(2)},\QQ)$. The key point is that each graded quotient is irreducible. We prove this assertion by explicit computation. This fact leads to the main technical result (Corollary \ref{stab} below) concerning the space $H_1(\pi^{(2)},\QQ)_{K_{g,1}}$ of $K_{g,1}$-coinvariants needed for the proof of Theorem \ref{finite}. \subsection{The Alexander Invariant as a Module}
Let $G$ be a group. The abelian group $H_1(G^{(2)},\ZZ)$ is the Alexander invariant of $G$. It is naturally a $\ZZ G^{ab}$ module via the conjugation action of $G$. When $G$ is finitely generated,  the $\ZZ G^{ab}$-module $H_1(G^{(2)},\ZZ)$ is also finitely generated. When $G$ is finitely presented and $G^{ab}$ is torsion-free, Fox differential calculus can be used to obtain a presentation for $H_1(G^{(2)},\ZZ)$ as a $\ZZ H$-module \cite{suciu10}.

Let $F = \langle x_1,\ldots, x_r \rangle$ be a free group or rank $r$, and let $\phi: F\rightarrow H$ denote its abelianization. The Fox derivatives $D_j$, $j=1,\ldots,r$ are abelian group homomorphisms $\ZZ F\rightarrow \ZZ H$ satisfying the following properties:
\begin{enumerate}
\item $D_i(x_j) = \delta_{ij}\cdot 1$
\item $D_i(xy) = D_i(x) + \phi(x)D_i(y)$\ \ \ $x,y\in F$.
\end{enumerate}
Suppose that $\langle F\ | s_1,\ldots, s_k \rangle$ is a finite presentation for a group $G$, and assume that the abelianization of $G$ is a free abelian group $H$. Then $H_1(G^{(2)},\ZZ)$ can be realized as a $\ZZ H$-submodule of the quotient of $(\ZZ H)^{\oplus r}$ whose presentation matrix is $\left( D_j(s_i)\right)$. 
%
%
%
We use the presentation $\langle x_1,\ldots, x_{2g} | [x_1,x_{g+1}]\cdots [x_g,x_{2g}]\rangle$ of $\pi$ in order to describe $H_1(\pi^{(2)},\ZZ)$. Define $\vartheta = [x_1,x_{g+1}]\cdots [x_g,x_{2g}]$.
\begin{lemma}
The matrix $(D_j(\vartheta))$ is equal to the row vector
\begin{equation*}
\Big(1-x_{g+1}, \ldots, 1-x_{2g}, -(1-x_1),\ldots,\ -(1-x_g)\Big).
\end{equation*}
\end{lemma}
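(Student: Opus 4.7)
The plan is to reduce the computation to Fox derivatives of a single commutator and then exploit the fact that $\phi$ kills every commutator (since $H$ is abelian) so that the product of commutators behaves additively.

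First I would derive the auxiliary formula for inverses. From $0 = D_i(1) = D_i(xx^{-1}) = D_i(x) + \phi(x) D_i(x^{-1})$ we get $D_i(x^{-1}) = -\phi(x^{-1}) D_i(x)$. Next I would compute $D_j([a,b])$ for a generic commutator $[a,b] = aba^{-1}b^{-1}$, applying the Leibniz rule (property 2) three times and simplifying using the inverse formula together with $\phi(ab) = \phi(a)\phi(b)$. The resulting identity is
\begin{equation*}
D_j([a,b]) = (1-\phi(b))\, D_j(a) - (1-\phi(a))\, D_j(b),
\end{equation*}
which is the key computation.

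Next I would expand the full product $\vartheta = [x_1,x_{g+1}]\cdots[x_g,x_{2g}]$. By repeated application of the Leibniz rule,
\begin{equation*}
D_j(\vartheta) = \sum_{k=1}^{g} \phi\bigl([x_1,x_{g+1}]\cdots[x_{k-1},x_{g+k-1}]\bigr)\, D_j\bigl([x_k,x_{g+k}]\bigr).
\end{equation*}
Since $H = \pi^{ab}$ is abelian, $\phi$ sends every commutator to $1$, so each prefactor is trivial. Combining this with the commutator formula and $D_j(x_i) = \delta_{ij}$ gives
\begin{equation*}
D_j(\vartheta) = \sum_{k=1}^{g} \Bigl[ (1 - x_{g+k})\,\delta_{jk} - (1 - x_k)\,\delta_{j,g+k} \Bigr].
\end{equation*}

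Finally I would read off the entries by cases: for $1 \le j \le g$ only the term $k=j$ survives and contributes $1 - x_{g+j}$; for $g+1 \le j \le 2g$, writing $j = g+i$, only $k=i$ survives and contributes $-(1-x_i)$. This matches the row vector in the statement. No step is a real obstacle here; the only point requiring care is bookkeeping in the inverse and commutator expansion, which is purely mechanical once one observes that all intermediate $\phi$-factors collapse to $1$.
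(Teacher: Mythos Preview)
Your proof is correct; this is exactly the standard Fox-calculus computation. The paper itself states the lemma without proof, treating it as a routine calculation, so there is nothing to compare against and your argument fills in precisely the details one would expect.
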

Let $e_j\in \ZZ H^{\oplus 2g}$ denote the column vector whose $j$th entry is 1 and whose other entries are 0. Define $\Theta = \sum_{j=1}^{g} (1-x_{g+j})e_j - (1-x_j)e_{g+j}\in (\ZZ H)^{\oplus 2g}$. Then $H_1(\pi^{(2)},\ZZ)$ is naturally a $\ZZ H$-submodule of $(\ZZ H)^{\oplus 2g}/(\Theta)$, where $(\Theta)$ denotes the submodule of $(\ZZ H)^{\oplus 2g}$ spanned by $\Theta$.
\begin{lemma}\label{lemma3.1}
The $\ZZ H$-module $\left(\ZZ H\right)^{\oplus 2g}/(\Theta)$
is torsion-free.
\end{lemma}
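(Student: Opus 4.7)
The plan is to exploit the fact that $R := \ZZ H \cong \ZZ[x_1^{\pm 1}, \ldots, x_{2g}^{\pm 1}]$ is a UFD, being a localization of the polynomial ring $\ZZ[x_1, \ldots, x_{2g}]$. The first observation I would record is that, by the explicit formula for $\Theta$, its $2g$ coordinates are, up to sign, the elements $1 - x_1, \ldots, 1 - x_{2g}$, each appearing exactly once (indexed by the involution of $\{1,\ldots,2g\}$ swapping $j$ with $g+j$). Each $1 - x_i$ is a prime of $R$, since the substitution $x_i \mapsto 1$ identifies $R/(1 - x_i)$ with the Laurent polynomial ring in the remaining variables, a domain. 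For $i \neq j$ the elements $1 - x_i$ and $1 - x_j$ are non-associate: the units of $R$ are $\pm x_1^{a_1}\cdots x_{2g}^{a_{2g}}$, and a direct monomial comparison rules out any unit identification. Hence the family $\{1 - x_i\}_{i=1}^{2g}$ is pairwise coprime in $R$.

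With this in hand, I would reduce torsion-freeness to showing that $R\Theta$ is saturated in $R^{\oplus 2g}$: whenever $f \in R \setminus \{0\}$ and $w \in R^{\oplus 2g}$ satisfy $fw = g\Theta$ for some $g \in R$, it suffices to show $f \mid g$, since then $w = (g/f)\Theta \in R\Theta$ and thus $w$ represents $0$ in the quotient. Componentwise the equation reads $fw_i = \pm g(1 - x_{\sigma(i)})$, where $\sigma$ is the fixed involution above.

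To conclude $f \mid g$, I would let $d = \gcd(f, g)$ and write $f = df'$, $g = dg'$ with $\gcd(f', g') = 1$. Cancelling $d$ in each coordinate equation yields $f' \mid g'(1 - x_{\sigma(i)})$, and coprimality of $f'$ and $g'$ then forces $f' \mid 1 - x_j$ for every $j = 1, \ldots, 2g$. The pairwise coprimality of the $1 - x_j$ forces $f'$ to be a unit, so $f$ and $d$ are associates and therefore $f \mid g$, as desired.

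I do not anticipate a serious obstacle. The only mildly subtle point is verifying that the coordinates of $\Theta$ truly form a pairwise coprime family of primes in $R$; once this is established the rest is a clean UFD manipulation, and no further input about the specific form of $\Theta$ is needed beyond the structure of its support.
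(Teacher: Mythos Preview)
Your argument is correct. Both your proof and the paper's rest on the same two facts: $\ZZ H$ is a UFD, and the coordinates of $\Theta$ are the pairwise non-associate primes $1-x_1,\ldots,1-x_{2g}$ (up to sign). The paper proceeds by writing out the coordinate equations $Pa_j=Q(1-x_{j'})$, $Pa_{j'}=-Q(1-x_j)$, cross-multiplying to obtain $(1-x_j)a_j+(1-x_{j'})a_{j'}=0$, and then solving explicitly for the $a_j$ to exhibit $x=r\Theta$. Your gcd reduction---passing to $f'=f/\gcd(f,g)$ and observing that $f'$ must divide every $1-x_j$---reaches the conclusion $f\mid g$ directly, bypassing that intermediate manipulation. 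The underlying content is the same; your packaging is slightly more conceptual and would extend verbatim to any $\Theta$ whose coordinates are merely pairwise coprime, whereas the paper's cross-multiplication step uses the specific pairing of indices $j\leftrightarrow j'$.
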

\begin{proof}
Suppose that $x = \sum_{j=1}^{2g} a_je_j$ and that the image of $x$ in $\left(\ZZ H\right)^{\oplus 2g}/(\Theta)$ is a torsion element. Then there exist non-zero $P,Q\in \ZZ H$ such that 
\begin{equation*}
P\sum_{j=1}^g a_je_j = Q \sum_{j=1}^g (1-x_{j'})e_j - (1-x_j)e_{j'}
\end{equation*}
where $j' = g+j$. This gives a system of equations
\begin{equation}\label{equation}
Pa_j = Q(1-x_{j'})\ \ \ \ \ \ \ \ Pa_{j'} = -Q(1-x_j)
\end{equation}
for each $j$. Multiplying the first by $a_{j'}$ and the second by $a_j$ leads to the equation
\begin{equation*}
Q\left(a_{j'}(1-x_{j'}) + a_j(1-x_j)\right) = 0.
\end{equation*}
Since $(1-x_j)$ and $(1-x_{j'})$ are prime ideals in the integral domain $\ZZ H$, we have $a_j\in (1-x_{j'})$ and $a_{j'}\in (1-x_j)$. Now write $a_j = r_j(1-x_{j'})$ and $a_{j'} = r_{j'}(1-x_j)$. Then by (\ref{equation}) we obtain the equations
\begin{equation}\label{equation2}
(Pr_j-Q)(1-x_{j'}) = 0\ \ \ \ \ \ \
(Pr_{j'}+Q)(1-x_j) = 0
\end{equation}
implying that $P(r_j+r_{j'}) = 0$. That is, $r_j = -r_{j'}$. From (\ref{equation2}) we also deduce that $r_j$ does not depend on $j$. That is, $r_j$ assumes a single value $r\in \ZZ H$. Finally, we are able to write
\begin{equation*}
x = r\sum_{j=1}^g (1-x_{j'})e_j - (1-x_j)e_{j'} = r\Theta.
\end{equation*}
This completes the proof.
\end{proof}
\begin{corollary}\label{lemma4.3}
The Alexander invariant of $\pi$ is a torsion-free $\ZZ H$-module.
\end{corollary}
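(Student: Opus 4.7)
The plan is to observe that Corollary \ref{lemma4.3} is essentially immediate from the setup established just before it. The discussion following the Fox-calculus review produces an explicit identification of $H_1(\pi^{(2)},\ZZ)$ as a $\ZZ H$-submodule of the quotient module $(\ZZ H)^{\oplus 2g}/(\Theta)$, obtained from the one-relator presentation of $\pi$ by inverting the Fox derivatives. So the whole point is that we already know the ambient module is torsion-free by Lemma \ref{lemma3.1}.

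With that in hand, the proof amounts to a single formal step: since $\ZZ H\cong \ZZ[x_1^{\pm 1},\ldots,x_{2g}^{\pm 1}]$ is an integral domain (being the group ring of a free abelian group), any submodule of a torsion-free $\ZZ H$-module is torsion-free. Apply this with the submodule being $H_1(\pi^{(2)},\ZZ)\hookrightarrow (\ZZ H)^{\oplus 2g}/(\Theta)$ and the ambient module being the torsion-free quotient given by Lemma \ref{lemma3.1}. This immediately yields the desired conclusion.

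There is really no obstacle here: the work has all been done in Lemma \ref{lemma3.1}, which computes the torsion of the Fox-theoretic presentation explicitly. The corollary is just the observation that passing to the submodule $H_1(\pi^{(2)},\ZZ)$ preserves the torsion-free property, a standard fact about modules over a domain. The only thing worth being careful about is making sure the inclusion $H_1(\pi^{(2)},\ZZ)\hookrightarrow (\ZZ H)^{\oplus 2g}/(\Theta)$ is indeed a $\ZZ H$-module inclusion (not merely an abelian group inclusion), which is what the Fox calculus formalism from \cite{suciu10} guarantees.
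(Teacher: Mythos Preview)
Your proposal is correct and matches the paper's intended argument exactly: the corollary is stated without proof in the paper because it follows immediately from Lemma~\ref{lemma3.1} together with the previously established fact that $H_1(\pi^{(2)},\ZZ)$ sits as a $\ZZ H$-submodule of $(\ZZ H)^{\oplus 2g}/(\Theta)$. Your observation that submodules of torsion-free modules over an integral domain are torsion-free is precisely the (unstated) one-line justification.
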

The rational Alexander invariant $A:= H_1(\pi^{(2)},\QQ)$ is a $\QQ H$-module. The following is deduced at once from Corollary \ref{lemma4.3}.
\begin{corollary}
The rational Alexander invariant $A$ is a torsion-free $\QQ H$-module. 
\end{corollary}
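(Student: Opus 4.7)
The plan is to deduce this at once from Corollary \ref{lemma4.3} by rationalizing the relevant coefficient rings and modules. The two key identifications are
\[
A = H_1(\pi^{(2)},\QQ) = H_1(\pi^{(2)},\ZZ)\otimes_{\ZZ}\QQ, \qquad \QQ H = \ZZ H \otimes_{\ZZ}\QQ,
\]
and the second of these exhibits $\QQ H$ as the localization of $\ZZ H$ at the multiplicative subset $\ZZ\setminus\{0\} \subset \ZZ H$. Thus the statement reduces to the general fact that if $M$ is a torsion-free module over a commutative domain $R$ and $S\subset R$ is a multiplicative subset of nonzero elements, then $S^{-1}M$ is torsion-free as an $S^{-1}R$-module.

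Concretely, I would argue as follows. Suppose $a\cdot m = 0$ for some nonzero $a\in \QQ H$ and some $m\in A$. Clear denominators to write $a = b/n$ with $b\in \ZZ H$ nonzero and $n\in \ZZ_{>0}$, and $m = m_0/k$ with $m_0\in H_1(\pi^{(2)},\ZZ)$ and $k\in \ZZ_{>0}$. The vanishing of $am$ in $A$ then means that $\ell b m_0 = 0$ in $H_1(\pi^{(2)},\ZZ)$ for some $\ell\in \ZZ_{>0}$. Since the integral Alexander invariant is $\ZZ H$-torsion-free by Corollary \ref{lemma4.3}, it is in particular $\ZZ$-torsion-free via the inclusion $\ZZ\hookrightarrow \ZZ H$, so $b m_0 = 0$ in $H_1(\pi^{(2)},\ZZ)$. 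Applying Corollary \ref{lemma4.3} once more, with $b\ne 0$, forces $m_0 = 0$, and hence $m = 0$ in $A$.

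There is essentially no real obstacle here: the result is a formal consequence of Corollary \ref{lemma4.3}. The only mild subtlety worth noting is the appeal to $\ZZ$-torsion-freeness of the integral Alexander invariant, needed in order to clear $\ZZ$-denominators without losing information; but this follows immediately from $\ZZ H$-torsion-freeness.
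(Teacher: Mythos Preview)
Your argument is correct and is exactly the approach the paper intends: the paper does not even supply a proof, merely stating that the corollary ``is deduced at once from Corollary \ref{lemma4.3},'' and your proposal simply spells out that deduction via the standard localization argument. One minor simplification: since $\ZZ H$ is a domain, $\ell b\neq 0$, so a single application of Corollary \ref{lemma4.3} to $\ell b$ already gives $m_0=0$; the intermediate appeal to $\ZZ$-torsion-freeness is not needed.
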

Let $J$ denote the augmentation ideal in $\QQ H$ and let $\text{gr}_{\bullet} A$ denote the graded $\QQ$-vector space associated to the $J$-adic filtration on $A$.
\begin{proposition}\label{proposition3}
For each $k\geq 0$, the graded quotients $\emph{gr}_k A$ are non-zero. 
\end{proposition}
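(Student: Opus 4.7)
The plan is to identify $\mathrm{gr}_k A$ with a piece of the Malcev Lie algebra of $\pi$. First observe that the $J$-action on $A$ is given concretely by commutators: for $a \in \pi^{(2)}$ and $i \in \{1,\dots,2g\}$,
\begin{equation*}
y_i \cdot [a] = [\,x_i a x_i^{-1} a^{-1}\,] = [[x_i, a]].
\end{equation*}
Iterating, $J^k A$ is the image of $\pi^{(k+2)}$ in $A = \pi^{(2)}/[\pi^{(2)},\pi^{(2)}] \otimes \QQ$. Hence
\begin{equation*}
\mathrm{gr}_k A \;\cong\; \pi^{(k+2)} \big/ \bigl(\pi^{(k+3)} \cdot [\pi^{(2)}, \pi^{(2)}]\bigr) \otimes \QQ,
\end{equation*}
and there is a natural $\Sp_g(\ZZ)$-equivariant surjection $\pi^{(k+2)}/\pi^{(k+3)} \otimes \QQ \twoheadrightarrow \mathrm{gr}_k A$.

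As noted in the proof of Proposition \ref{prop3}, the graded Lie algebra $\mathrm{gr}^{\mathrm{LCS}}\pi \otimes \QQ$ is isomorphic to the quadratic Lie algebra $\mathfrak p := \mathrm{Lie}(H_\QQ)/\langle \omega \rangle$, where $\omega \in \Lambda^2 H_\QQ$ is the symplectic form. Under this isomorphism, the surjection above becomes $\mathfrak p_{k+2} \twoheadrightarrow \mathrm{gr}_k A$, with kernel the degree-$(k+2)$ piece of the meta-abelian ideal $[\mathfrak p^{(2)}, \mathfrak p^{(2)}]$. For $k = 0$ and $k = 1$ this kernel vanishes by degree (there is no decomposition $k+2 = i + j$ with $i,j \geq 2$), so $\mathrm{gr}_0 A \cong \Lambda^2 H_\QQ/\langle\omega\rangle$ and $\mathrm{gr}_1 A \cong \mathfrak p_3$, both manifestly nonzero.

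For $k \geq 2$ the cleanest route is the full representation-theoretic decomposition of Proposition \ref{proposition2}: starting from the Fox-calculus presentation of $A$ as a subquotient of $\QQ H^{\oplus 2g}$ and passing to the $J$-adic associated graded, one uses that $\mathrm{gr}_\bullet \QQ H \cong \mathrm{Sym}^\bullet H_\QQ$ and that $y_1,\ldots,y_{2g}$ is a regular sequence, so the Koszul complex describes all the relevant kernels. A Pieri/Littlewood-Richardson decomposition of the resulting symplectic cokernel identifies $\mathrm{gr}_k A$ with the irreducible representation $V(k\lambda_1 + \lambda_2)$, which is nonzero, giving the proposition. The main obstacle is the Artin-Rees-type compatibility needed to ensure that passing to the associated graded of the filtered resolution yields exactly the graded Koszul complex one would naively write down; once that is established, irreducibility and non-vanishing follow by classical symplectic representation theory.
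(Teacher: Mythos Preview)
Your argument for $k=0,1$ via the surjection $\mathfrak p_{k+2}\twoheadrightarrow \text{gr}_kA$ is fine, but for $k\geq 2$ there is a genuine gap. You appeal to Proposition~\ref{proposition2}, yet in the paper's logical order that proposition is proved \emph{using} Proposition~\ref{proposition3}: the argument is that the surjection $V(n\lambda_1+\lambda_2)\to\text{gr}_nA$ must be an isomorphism by Schur's lemma \emph{because} $\text{gr}_nA\neq 0$. Invoking it here is therefore circular. You seem to realize this and sketch an independent derivation via the Fox presentation and a Koszul argument, but you explicitly flag the ``Artin--Rees-type compatibility'' as an unresolved obstacle; without it, you have not shown that the associated graded of the filtered resolution is what you claim, so the non-vanishing is not established.

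The paper's proof bypasses all of this with a short commutative-algebra argument. Since $A$ is a finitely generated module over the noetherian domain $\QQ H$ and (by Corollary~\ref{lemma4.3}) torsion-free, the Krull intersection theorem gives $\bigcap_n J^nA=0$. If some $\text{gr}_kA$ vanished, then $J^kA=J^{k+1}A=\cdots$ would equal the intersection, hence $J^kA=0$; but torsion-freeness and $A\neq 0$ forbid this. No representation theory or Lie-algebra identification is needed, and the circularity disappears. If you want to salvage your approach without Koszul machinery, a direct route is to exhibit an explicit element: the image of $(a_1-1)^k[a_1,a_2]$ corresponds to the highest-weight vector $a_1^k\otimes a_1\wedge a_2$, and one can check it is nonzero in $\text{gr}_kA$ using the torsion-freeness already established---but that is essentially the paper's argument rephrased.
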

\begin{proof}
Because $\pi$ is finitely generated, $A$ is a finitely generated module over $\QQ H$. Since $\QQ H$ is noetherian, the Krull intersection theorem applies, (see \cite{eisenbud}, p.152) and because $A$ is a torsion-free $\QQ H$ module, the intersection
\begin{equation*}
\bigcap_{n=1}^{\infty}J^nA
\end{equation*}
must vanish. This implies that the $J$-adic filtration does not stabilize and therefore that the graded quotients are all non-zero.
\end{proof}
\subsection{Computing the Graded Quotients}

Following the conventions of \cite{hain97}, denote the irreducible representation of $\Sp_g(\QQ)$ with highest weight $\lambda$ by $V(\lambda)$. Define $H_{\QQ} = H\otimes \QQ$.

\begin{proposition}
There is a surjective $\Sp_g(\ZZ)$-equivariant map 
\begin{equation*}
\varphi:\emph{Sym}^n(H_{\QQ})\otimes \Lambda^2(H_{\QQ})\rightarrow \emph{gr}_{n}A
\end{equation*}
that satisfies
\begin{equation}\label{formula}
\varphi(x_1^{n_1}\cdots x_{2g}^{n_{2g}}\otimes y\wedge z) = (x_1-1)^{n_1}\cdots (x_{2g}-1)^{n_{2g}}[y,z].
\end{equation}
\end{proposition}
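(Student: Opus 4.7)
The plan is to build $\varphi$ in two stages: first a degree-zero map $\Lambda^2 H_{\QQ}\to A/JA$, and then an extension to every degree using the graded ring structure on $\QQ H$. The key preliminary observation is that since $H\cong \ZZ^{2g}$ is free abelian, the augmentation ideal $J\subset \QQ H$ satisfies $\text{gr}_J \QQ H\cong \text{Sym}(H_{\QQ})$ canonically, with $\bar x\in H_{\QQ}$ corresponding to the class of $x-1$ in $J/J^2$.

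First I would define $\bar\varphi_0\colon \Lambda^2 H_{\QQ}\to A/JA$ by $\bar y\wedge \bar z\mapsto \overline{[\tilde y,\tilde z]}$ for any lifts $\tilde y,\tilde z\in \pi$. Well-definedness, bilinearity, and the alternating property all rest on the identity $[ab,c]=a[b,c]a^{-1}[a,c]$, which in the $\QQ H$-module $A$ becomes $\overline{[ab,c]}=\bar a\cdot \overline{[b,c]}+\overline{[a,c]}$; modulo $JA$ the $\QQ H$-action is trivial, yielding additivity in the first argument, and $[a,a]=1$ gives the alternating property. Surjectivity of $\bar\varphi_0$ is immediate: $\pi^{(2)}$ is generated as a group by commutators $[a,b]$, so $A$ is generated as a $\QQ H$-module by images of such commutators, which therefore span $A/JA$ over $\QQ$.

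Since $A$ is finitely generated over the noetherian ring $\QQ H$, multiplication descends to a surjection $(J^n/J^{n+1})\otimes_{\QQ}(A/JA)\twoheadrightarrow J^nA/J^{n+1}A$ (the kernel of the multiplication $J^n\otimes A\to J^nA$ contains both $J^{n+1}\otimes A$ and $J^n\otimes JA$). Composing with $\text{id}\otimes \bar\varphi_0$ and using the identification $J^n/J^{n+1}\cong \text{Sym}^n(H_{\QQ})$ yields the desired surjection $\varphi\colon \text{Sym}^n(H_{\QQ})\otimes \Lambda^2(H_{\QQ})\twoheadrightarrow \text{gr}_n A$. Chasing basis elements through the construction recovers formula $(\ref{formula})$.

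For $\Sp_g(\ZZ)$-equivariance, lift $\sigma\in \Sp_g(\ZZ)$ to $\tilde\sigma\in \text{Aut}(\pi)$ via Theorem \ref{auts}. Then $\tilde\sigma$ preserves the characteristic subgroup $\pi^{(2)}$ and hence acts on $A$; its induced action on $\QQ H$ (through $\bar\sigma$) preserves $J$, hence the filtration $J^nA$, and compatibility of $\varphi$ with the two actions on generators is immediate from the formula. Independence of the choice of lift reduces to showing that $T_{g,1}=\ker(\text{Aut}(\pi)\to \Sp_g(\ZZ))$ acts trivially on $\text{gr}_n A$. For $\tau\in T_{g,1}$, write $\tau(x_i)=x_ic_i$ with $c_i\in \pi^{(2)}$; expanding $\overline{[\tau(x_i),\tau(x_j)]}$ via the commutator identities displays the difference from $\overline{[x_i,x_j]}$ as a sum of terms of the form $(x_k-1)\cdot(\text{element of }A)$, hence in $JA$. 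Because $\tau$ acts as the identity on $H$, its action on $A$ is $\QQ H$-linear, so it preserves the filtration and acts as the identity on each $J^nA/J^{n+1}A$.

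The main obstacle is the $\Sp_g(\ZZ)$-equivariance, which is subtle because $\text{gr}_n A$ inherits only an \emph{a priori} $\text{Out}(\pi)$-action, and one must verify by direct commutator calculation that the Torelli group acts trivially on every graded quotient so that the action descends to $\Sp_g(\ZZ)$. Everything else is a routine unpacking of the $J$-adic associated graded of a finitely generated module.
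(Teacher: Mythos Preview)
Your proof is correct and follows essentially the same route as the paper: identify $A/JA$ with (a quotient of) $\Lambda^2 H_{\QQ}$, identify $J^n/J^{n+1}$ with $\text{Sym}^n(H_{\QQ})$, and use the multiplication surjection $(J^n/J^{n+1})\otimes (A/JA)\twoheadrightarrow \text{gr}_n A$. The only cosmetic difference is that the paper obtains the degree-zero isomorphism $A_H\cong \Lambda^2_0 H_{\QQ}$ from the 5-term sequence of $\pi^{(2)}\to\pi\to H$ and deduces triviality of the $T_{g,1}$-action on $\text{gr}_n A$ from surjectivity and triviality on the domain, whereas you build $\bar\varphi_0$ and verify Torelli-triviality on the target by hand; both arguments are equivalent.
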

\begin{proof}
There is a surjective map
$$J^n/J^{n+1}\otimes_{\QQ} A_H \rightarrow \text{gr}_{n}A$$
given by multiplication which is easily seen to be $\Gamma_{g,1}$-equivariant (where $\Gamma_{g,1}$ acts diagonally on the left). Since the Torelli group $T_{g,1}$ acts trivially on both sides, this map is actually $\Sp_g(\ZZ)$-equivariant. The 5-term exact sequence for the extension $\pi^{(2)}\rightarrow \pi\rightarrow H$ produces an $\Sp_g(\ZZ)$-equivariant isomorphism $A_H\rightarrow \Lambda^2_0H$ defined by $[y,z]\rightarrow y\wedge z$. There is also an $\Sp_g(\ZZ)$-equivariant isomorphism $\text{Sym}^n(H_{\QQ})\rightarrow J^n/J^{n+1}$ defined by sending the degree $n$ monomial $x_1^{n_1}\cdots x_{2g}^{n_{2g}}\rightarrow (x_1-1)^{n_1}\cdots (x_{2g}-1)^{n_{2g}}$. Since $\Lambda^2H_{\QQ}$ decomposes as $\Lambda^2_0H_{\QQ} + \QQ\theta$, and the class of $\theta$ vanishes in $H_1(\pi^{(2)},\QQ)$, we obtain a well-defined map
$$\text{Sym}^n(H_{\QQ})\otimes \Lambda^2H_{\QQ}\rightarrow \text{gr}_{n+2}A$$
with the desired properties.
\end{proof}
Let $\text{Sym}^{\bullet}(H_{\QQ})$ denote the graded ring $\bigoplus_{n\geq 0} \text{Sym}^n(H_{\QQ})$. The tensor product 
\begin{equation*}
\text{Sym}^{\bullet}(H_{\QQ})\otimes \Lambda^2H_{\QQ} = \bigoplus_{n\geq 0} \text{Sym}^{n}(H_{\QQ})\otimes \Lambda^2H_{\QQ}
\end{equation*}
is then both a graded $\text{Sym}^{\bullet}(H_{\QQ})$-module and a $\Sp_g(\ZZ)$-module, and there is a natural surjective $\Sp_g(\ZZ)$-equivariant map
$$\text{Sym}^{\bullet}(H_{\QQ})\otimes \Lambda^2H_{\QQ}\rightarrow \text{gr}_{\bullet}A$$
of graded $\text{Sym}^{\bullet}(H_{\QQ})$-modules.
\begin{proposition}
When $n\geq 2$ the highest weight decomposition of $\emph{Sym}^n(H_{\QQ})\otimes \Lambda^2H$ is 
\small
\begin{equation*}
\left\{
\begin{array}{ll} 
V(n\lambda_1+\lambda_2) + 2V(n\lambda_1)+V((n-2)\lambda_1+\lambda_2)\ \ \ \ \ \ \ \ \ \ \ \ \ \ \ \ \ \ \ \ \ \ \ \ \ \ \ \  \ \ \displaystyle g=2\\
V(n\lambda_1+\lambda_2) + 2V(n\lambda_1)+ V((n-1)\lambda_1 + \lambda_3) + V((n-2)\lambda_1+\lambda_2) \ \ \ \  g\geq 3
 \end{array} 
 \right.
\end{equation*}
\normalsize
The highest weight decomposition of $H_{\QQ}\otimes \Lambda^2H$ is
\small
\begin{equation*}
\left\{
\begin{array}{ll} 
V(\lambda_1+\lambda_2) + 2V(\lambda_1)\ \ \ \ \ \ \ \ \ \ \ \ \ \ \ \ \ \ \ \ \ \ \ \ \ \ \ \  \ \ \displaystyle g=2\\
V(\lambda_1+\lambda_2) + 2V(\lambda_1)+ V(\lambda_3)  \ \ \ \ \ \ \ \ \ \ \ \ \ \ \ \ \ \ \ g\geq 3
 \end{array} 
 \right.
\end{equation*}
\normalsize
\end{proposition}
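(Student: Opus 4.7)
The strategy is to view $\text{Sym}^n H_\QQ \otimes \Lambda^2 H_\QQ$ first as a $\GL(H_\QQ)$-representation, decompose it via Pieri's rule, and then restrict each summand to $\Sp_g(\QQ)$ using Littlewood's branching formula. As a $\GL(H_\QQ)$-module, Pieri's rule yields
\begin{equation*}
\text{Sym}^n H_\QQ \otimes \Lambda^2 H_\QQ \cong S_{(n+1,1)}(H_\QQ) \oplus S_{(n,1,1)}(H_\QQ),
\end{equation*}
where $S_\lambda$ denotes the Schur functor. Littlewood's branching formula from $\GL(H_\QQ)$ to $\Sp_g(\QQ)$, valid when $\ell(\lambda)\leq g$, then reads
\begin{equation*}
S_\lambda(H_\QQ)\big|_{\Sp_g(\QQ)} \cong \bigoplus_\mu \Big(\sum_\delta c^\lambda_{\mu,\delta}\Big) V(\mu),
\end{equation*}
where $\delta$ ranges over partitions whose Young diagrams have columns of even length, $c^\lambda_{\mu,\delta}$ is the Littlewood--Richardson coefficient, and a partition $\mu = (m_1 \geq m_2 \geq \cdots)$ is identified with the dominant weight $\sum_i (m_i - m_{i+1})\lambda_i$.

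The bulk of the proof is an enumeration of LR tableaux. For $\lambda = (n+1,1)$, only $\delta = \emptyset$ and $\delta = (1,1)$ pass the length bound, and the lattice-word condition restricts the nonzero contributions to $V(n\lambda_1+\lambda_2)$ (from $\mu=(n+1,1)$, $\delta=\emptyset$) and $V(n\lambda_1)$ (from $\mu=(n)$, $\delta=(1,1)$), giving
\begin{equation*}
S_{(n+1,1)}(H_\QQ)\big|_{\Sp} \cong V(n\lambda_1+\lambda_2)\oplus V(n\lambda_1).
\end{equation*}
For $\lambda = (n,1,1)$ with $g\geq 3$, the enumeration over $\delta\in\{\emptyset,(1,1)\}$ produces three surviving summands from $(\mu,\delta) = ((n,1,1),\emptyset)$, $((n-1,1),(1,1))$, and $((n),(1,1))$, giving
\begin{equation*}
S_{(n,1,1)}(H_\QQ)\big|_{\Sp} \cong V((n-1)\lambda_1+\lambda_3)\oplus V((n-2)\lambda_1+\lambda_2)\oplus V(n\lambda_1).
\end{equation*}
Summing these two restrictions produces the $g\geq 3$ formula stated in the proposition.

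For $g=2$ the partition $(n,1,1)$ has length exceeding the rank, so the naive branching rule breaks down and one must invoke the modification rule for $\GL\to\Sp$ branching (equivalently, use the isomorphism $\Lambda^3 H\cong H$ in $\Sp_4$ induced by the symplectic form). The outcome is that the $V((n-1)\lambda_1+\lambda_3)$ summand --- which is not a representation of $\Sp_4$ anyway --- drops out, while the other terms survive, yielding the $g=2$ formula. The decomposition of $H_\QQ\otimes \Lambda^2 H_\QQ$ is handled identically as the $n=1$ instance of the same argument, applied to the Pieri decomposition $S_{(2,1)}(H_\QQ)\oplus S_{(1,1,1)}(H_\QQ)$. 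The main technical hurdle throughout is the LR-tableau bookkeeping: one must verify that no partition $\delta$ beyond $\emptyset$ and $(1,1)$ admits a valid LR filling of the relevant skew shape, and that the lattice-word condition correctly eliminates the spurious candidate fillings.
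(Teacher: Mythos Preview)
Your argument is correct and gives a genuinely different proof from the paper's. The paper proceeds by exhibiting, for $g=2,3$, explicit highest weight vectors in $\text{Sym}^n(H_{\QQ})\otimes \Lambda^2 H_{\QQ}$ for each of the claimed irreducible summands (e.g.\ $a_1^n\otimes a_1\wedge a_2$ for $V(n\lambda_1+\lambda_2)$, and vectors built from the map $i:\Lambda^3 H\to H\otimes \Lambda^2 H$ for the others), verifies completeness by a Weyl dimension count, and then invokes a representation stability result of Hain to pass to $g\geq 4$. Your route via Pieri's rule and Littlewood's $\GL\to\Sp$ branching formula is more systematic and avoids the case-by-case dimension check; the LR enumeration you sketch is accurate, and in particular your identification of the only contributing pairs $(\mu,\delta)$ is right.

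Two remarks. First, your treatment of $g=2$ is a little thin: you correctly flag that $\ell((n,1,1))=3>g$ puts you outside the stable range and gesture at the modification rule, but you do not actually carry it out. It would strengthen the argument to either run King's modification explicitly, or simply observe that for $\Sp_4$ one has $\Lambda^3 H_{\QQ}\cong H_{\QQ}$, so $S_{(n,1,1)}(H_{\QQ})\cong \text{Sym}^{n-1}H_{\QQ}\otimes \Lambda^3 H_{\QQ}/(\ldots)$ can be handled directly, or fall back on a dimension count in that one case. Second, the paper's approach has a payoff yours lacks: the explicit highest weight vectors it writes down are immediately reused in the next proposition, where one checks that each of them except $a_1^n\otimes a_1\wedge a_2$ maps to zero in $\text{gr}_n A$ under the map $\varphi$. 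Your combinatorial decomposition establishes the proposition cleanly, but you would still need to produce those vectors separately to continue with the paper's line of argument.
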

\begin{proof}
\noindent 
Let $i: \Lambda^3H\rightarrow H\otimes \Lambda^2H$ be the $\Sp_g(\QQ)$-equivariant map given by 
$$i(x\wedge y\wedge z) = x\otimes y\wedge z + y\otimes z\wedge x + z\otimes x\wedge y.$$
When $g=2, 3$ it is straightforward to show that the following vectors are highest weight vectors in $\text{Sym}^n(H_{\QQ})\otimes \Lambda^2H_{\QQ}$ when the formulae make sense: 
\begin{table}[h]
\begin{center}
    \begin{tabular}{| l | l | l | l |}
    \hline
   Irreducible Summand & Highest Weight Vector(s)  \\ \hline
    $V(n\lambda_1+\lambda_2)$  & $a_1^n\otimes a_1\wedge a_2$ \\ \hline
    $V(n\lambda_1)$   & $a_1^n\otimes \theta$ \ \ \ \  \ \ $a_1^{n-1}\cdot i(a_1\wedge \theta)$ \\ \hline
    $V((n-1)\lambda_1 + \lambda_3)$     & $a_1^{n-1}\cdot i(a_1\wedge a_2\wedge a_3)$ \\ \hline
    $V((n-2)\lambda_1+\lambda_2)$   & $a_1^{n-2}a_2\cdot i(a_1\wedge \theta) - a_1^{n-1}\cdot i(a_2\wedge \theta)$  \\ \hline
    \end{tabular}
    \caption{Highest weight vectors in $\text{Sym}^n(H_{\QQ})\otimes \Lambda^2H_{\QQ}$}\label{table1}
\end{center}
\end{table}
\\This shows that $\text{Sym}^n(H_{\QQ})\otimes \Lambda^2H_{\QQ}$ contains the summands claimed in the statement of the proposition. From a dimension count, using, for example, the Weyl character formula, it is readily checked that the sum of the dimensions of these irreducible summands is equal to the dimension of $\text{Sym}^n(H_{\QQ})\otimes \Lambda^2H_{\QQ}$. Thus when $g = 2$ or $3$, we have the required irreducible decomposition. To handle the cases where $g\geq 4$ we apply the stability result on p.618 of \cite{hain97}, using the fact that $\Lambda^2H_{\QQ} = V(\lambda_2)+V(0)$. 
\end{proof}
We will need the following fact, which is easily proved using the Hall-Witt identity $[x,yz] = [x,y] [x,z]^{y^{-1}}$, where $a^b = b^{-1}ab$.
\begin{lemma}
The Jacobi identity holds in $A$, i.e. for any elements $x,y,z\in \pi$ we have that
\begin{equation*}
[x,[y,z]] + [y,[z,x]] + [z,[x,y]] = 0.
\end{equation*}
\end{lemma}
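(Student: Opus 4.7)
My plan is to compute the element $[x, [y, z]]$ of $A$ in two different ways and equate the results; the Jacobi identity drops out after rearrangement. Below, $\bar{x}, \bar{y}, \bar{z}$ denote the images of $x, y, z$ in $H$, acting on $A$ via its $\QQ H$-module structure.

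The first computation is purely module-theoretic. For any $u \in \pi^{(2)}$, the group commutator $[x, u] = x u x^{-1} u^{-1}$ descends in $A$ to $(\bar{x} - 1)\,u$, because the conjugation action of $x$ on $u \in \pi^{(2)}$ is, after passing to $A$, precisely the action of $\bar{x} \in H$ on the image of $u$. Specializing to $u = [y, z]$ yields
\[
[x, [y, z]] = (\bar{x} - 1)\,[y, z] \quad \text{in } A,
\]
and analogous formulas hold for the other two terms in the Jacobi expression.

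The second computation uses the commutator expansion $[a, bc] = [a, b]\,[a, c]^{b^{-1}}$ from the hint. In $A$ this descends to $[a, bc] = [a, b] + \bar{b}\,[a, c]$, since $[a, c]^{b^{-1}} = b\,[a, c]\,b^{-1}$ corresponds to the $\bar{b}$-action on $[a,c] \in A$. Iterating this on $[x, [y, z]] = [x, y z y^{-1} z^{-1}]$, and using the auxiliary identity $[x, y^{-1}] = -\bar{y}^{-1}\,[x, y]$ in $A$ (obtained by applying the descended formula to $[x, y y^{-1}] = [x, 1] = 0$), a short calculation gives
\[
[x, [y, z]] = (1 - \bar{z})\,[x, y] \;-\; (1 - \bar{y})\,[x, z] \quad \text{in } A.
\]

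Equating the two expressions for $[x, [y, z]]$ and using the antisymmetry $[x, z] = -[z, x]$ produces
\[
(\bar{x} - 1)\,[y, z] + (\bar{y} - 1)\,[z, x] + (\bar{z} - 1)\,[x, y] = 0,
\]
which is the Jacobi identity. The only real work is the bookkeeping in the second computation, which is a straightforward iterated application of the given commutator identity; there is no conceptual obstacle, and the whole proof is a formal manipulation.
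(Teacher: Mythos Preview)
Your proof is correct and follows essentially the same approach the paper indicates: the paper's proof consists only of the remark that the lemma ``is easily proved using the Hall-Witt identity $[x,yz] = [x,y][x,z]^{y^{-1}}$,'' and your argument supplies exactly those details, expanding $[x,[y,z]]$ via this identity and comparing with the module-theoretic expression $(\bar{x}-1)[y,z]$. Your two-computation organization is a clean way to package the bookkeeping, but conceptually it is the same formal manipulation the paper has in mind.
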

\begin{proposition}
The image in $\emph{gr}_{n}A$ of each highest weight vector in Table \ref{table1} except $a_1^n\otimes a_1\wedge a_2$ vanishes.
\end{proposition}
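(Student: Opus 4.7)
The plan is to combine two facts: $\varphi$ by construction kills the $\theta$ direction in $\Lambda^2 H_\QQ$, and the Jacobi identity in $A$ translates directly to the vanishing of $\varphi$ on the image of the map $i$.

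First I would dispose of the vector $a_1^n\otimes \theta$ in the $V(n\lambda_1)$ summand. From the construction of $\varphi$ in the previous proposition, the map $\Lambda^2 H_\QQ \to A_H$ factors through the projection to $\Lambda^2_0 H_\QQ$ and so sends $\theta$ to zero; hence $\varphi(a_1^n\otimes \theta) = (a_1-1)^n\cdot [\theta] = 0$.

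Next I would handle the vectors involving $i$. Under the $\QQ H$-module structure on $A$ coming from conjugation, the commutator identity $[x,[y,z]] = x[y,z]x^{-1}[y,z]^{-1}$ translates in the additive group $A$ into
\[
[x,[y,z]] = (x-1)\cdot [y,z].
\]
The Jacobi identity of the lemma above therefore reads
\[
(x-1)[y,z] + (y-1)[z,x] + (z-1)[x,y] = 0 \quad \text{in } A,
\]
which is precisely the statement $\varphi(i(x\wedge y\wedge z)) = 0$ for $x,y,z\in H$. The same computation, after multiplication by any monomial $m\in \text{Sym}^{n-1}(H_\QQ)$, gives $\varphi(m\cdot i(x\wedge y\wedge z)) = 0$. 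This immediately disposes of the vector $a_1^{n-1}\cdot i(a_1\wedge a_2\wedge a_3)$ in the $V((n-1)\lambda_1+\lambda_3)$ summand.

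Finally, I would treat the vectors involving $i(x\wedge\theta)$. Writing $\theta = \sum_{j=1}^g a_j\wedge a_{g+j}$ and using the linearity of $i$ on $\Lambda^3 H_\QQ$, one expands $i(x\wedge\theta) = \sum_j i(x\wedge a_j\wedge a_{g+j})$, each summand being of the form already dispatched. Consequently $\varphi$ vanishes on $a_1^{n-1}\cdot i(a_1\wedge\theta)$ and on $a_1^{n-1}\cdot i(a_2\wedge\theta)$, and hence on the $V((n-2)\lambda_1+\lambda_2)$ vector $a_1^{n-2}a_2\cdot i(a_1\wedge\theta) - a_1^{n-1}\cdot i(a_2\wedge\theta)$ as well. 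In fact the argument shows these vectors vanish in $A$ itself, not just in $\text{gr}_n A$. The only nontrivial point is the translation $[x,[y,z]] = (x-1)\cdot[y,z]$, which is a direct unpacking of the conjugation action on the Alexander invariant; once that is in hand, everything else is linearity and one application of the Jacobi lemma.
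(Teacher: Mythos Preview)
Your proof is correct and follows the same approach as the paper: the paper's argument is simply that vanishing ``follows readily from an application of the Jacobi Identity and the fact that $\theta$ maps to zero in $A$,'' and you have unpacked precisely these two ingredients, including the key translation $[x,[y,z]] = (x-1)\cdot[y,z]$ that makes the Jacobi identity coincide with $\varphi\circ i = 0$. Your added observation that the vanishing already holds in $A$ (not merely in the graded quotient) is correct and slightly sharper than what is needed.
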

\begin{proof}
The vanishing of the image of each highest weight vector in Table \ref{table1} besides $a_1^n\otimes a_1\wedge a_2$ follows readily from an application of the Jacobi Identity and the fact that $\theta$ maps to zero in $A$. On the other hand, $\text{gr}_{n}A\neq 0$, so the image of $a_1^n\otimes a_1\wedge a_2$ must be non-zero. 
\end{proof}
We are now in a position to give the proof of Proposition \ref{proposition2}. Recall that any finite-dimensional irreducible representation of $\Sp_g(\QQ)$ is also an irreducible representation of $\Sp_g(\ZZ)$. 
\begin{proof}[Proof of Proposition \ref{proposition2}]
For each $n\geq 0$, the $\Sp_g(\ZZ)$-equivariant map $$V(n\lambda_1+\lambda_2)\rightarrow \text{gr}_{n}A$$ is surjective. Since $\text{gr}_nA\neq 0$ and $V(n\lambda_1+\lambda_2)$ is an irreducible representation of $\Sp_g(\ZZ)$, this map must be an isomorphism by Schur's Lemma.
\end{proof}
\subsection{The Filtration Stabilizes}
Let $J_{K_{g,1}}$ denote the augmentation ideal in the group algebra $\QQ K_{g,1}$. Because $K_{g,1}$ is a normal subgroup of $\Gamma_{g,1}$,  the subspace $J_{K_{g,1}}A$ of $A$ is actually a $\QQ H$-submodule. It follows that space of coinvariants $A_{K_{g,1}}$ is naturally a $\QQ H$-module. Let $\text{gr}_{\bullet}A_{K_{g,1}}$ denote the graded $\QQ$-vector space associated with the $J$-adic filtration on $A_{K_{g,1}}$.

For each $n\geq 0$, the canonical projection $A\rightarrow A_{K_{g,1}}$ induces a surjective $\Sp_g(\ZZ)$-equivariant map
\begin{equation}\label{10}
\psi_n: \text{gr}_{n}A\rightarrow \text{gr}_{n}A_{K_{g,1}}.
\end{equation}
Since the domain is an irreducible $\Sp_g(\ZZ)$-module, this map is either zero or an isomorphism. 

To show that the $J$-adic filtration on $\text{gr}_{N}A_{K_{g,1}}$ must stabilize, we first exhibit a non-zero element of $\text{gr}_{N}A$ for some sufficiently large $N$. We then show that this particular element lies in the kernel of $\psi_N$. Since $\text{gr}_{N}A$ is irreducible, this implies that $\text{gr}_{N}A_{K_{g,1}} = 0$. 

In the calculations that follow, we will use the generating set $\{a_j,b_j\}_{j=1}^{2g}$ for $\pi$ pictured below in Figure \ref{figure} along with the separating curve $c$. 
Let $T_c$ denote a Dehn twist on the separating simple closed curve $c$. Then $T_c\in K_{g,1}$.
\begin{lemma}\label{lemmakg1}
The element $(a_1-1)(a_2-1)[a_1,b_1]\in A$ lies in $J_{K_{g,1}}A$. 
\end{lemma}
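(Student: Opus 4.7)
The plan is to produce an element $y \in A$ and an element $\varphi \in K_{g,1}$ such that $(\varphi - 1) \cdot y$ equals the element $(a_1-1)(a_2-1)[a_1,b_1]$ (up to a sign). The natural candidate is $\varphi = T_c$, which lies in $K_{g,1}$ by Theorem \ref{johnson}, paired with $y = [a_2, a_1]$ viewed as a class in $A = H_1(\pi^{(2)},\QQ)$.

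First I would record the action of $T_c$ on $\pi$. Because $c$ separates a genus-one handle containing $a_1, b_1$ from the rest of $S_{g,1}$ and the marked point lies outside this handle (as in Figure \ref{figure}), $T_c$ fixes $a_j, b_j$ for $j \geq 2$ and acts on $a_1, b_1$ by conjugation by $c$:
\begin{equation*}
T_c(a_1) = c a_1 c^{-1}, \qquad T_c(b_1) = c b_1 c^{-1},
\end{equation*}
where $c$ denotes the loop in $\pi$ representing $[a_1, b_1]$.

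Next I would compute $T_c([a_2, a_1])$ as an element of $\pi^{(2)}$. Writing $c a_1 c^{-1} = a_1 \cdot [a_1^{-1}, c]$ and expanding via the commutator identity $[x, yz] = [x, y] \cdot y [x, z] y^{-1}$ yields
\begin{equation*}
T_c([a_2, a_1]) = [a_2, c a_1 c^{-1}] = [a_2, a_1] \cdot a_1 \, [a_2, [a_1^{-1}, c]] \, a_1^{-1}.
\end{equation*}
Since conjugation by any element of $\pi^{(2)}$ is trivial on $A$ (being inner in $\pi^{(2)}$), the class in $A$ of $T_c([a_2, a_1]) \cdot [a_2, a_1]^{-1}$ reduces to the class of $a_1 \, [a_2, [a_1^{-1}, c]] \, a_1^{-1}$, which by the $\QQ H$-module structure is $a_1 \cdot [a_2, [a_1^{-1}, c]]$. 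The identity $[[x, z]]_A = (x - 1) [z]_A$ for $z \in \pi^{(2)}$ (a direct consequence of the definition of the $\QQ H$-action via conjugation) applied twice then gives
\begin{equation*}
(T_c - 1) \cdot [a_2, a_1] = a_1 (a_2 - 1)(a_1^{-1} - 1)[a_1, b_1] = -(a_1 - 1)(a_2 - 1)[a_1, b_1]
\end{equation*}
in $A$, using $a_1 (a_1^{-1} - 1) = -(a_1 - 1)$ in $\QQ H$. This identifies the desired element as $-(T_c - 1) \cdot [a_2, a_1]$, which lies in $J_{K_{g,1}} A$.

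The main obstacle is fixing the correct formula for $T_c$ on the generators of $\pi$; this depends on the orientation and basepoint conventions of Figure \ref{figure}. Once that is settled, the rest is routine commutator algebra, and the key structural input is the identity $[[x, z]]_A = (x - 1)[z]_A$ translating commutators in $\pi^{(2)}$ into the $\QQ H$-action on $A$.
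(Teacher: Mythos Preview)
Your proof is correct and follows essentially the same approach as the paper: both apply the separating twist $T_c\in K_{g,1}$ to the commutator of $a_1$ and $a_2$ and reduce via the identity $[x,z]\mapsto (x-1)[z]$ in $A$. The paper works with $[a_1,a_2]$ and writes $T_c(a_1)=[[a_1,b_1],a_1]\,a_1$, obtaining $(T_c-1)[a_1,a_2]=(a_1-1)(a_2-1)[a_1,b_1]$ directly; you work with $[a_2,a_1]$ and the equivalent form $T_c(a_1)=c\,a_1\,c^{-1}$, arriving at the same element up to sign, which is immaterial for membership in $J_{K_{g,1}}A$.
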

\begin{proof}
We have $T_{c}(a_1) = [[a_1,b_1],a_1]a_1$ and $T_{c}(a_2) = a_2$. By the Hall-Witt identity, then, the following identity is satisfied in $H_1(\pi^{(2)},\QQ)$:
\begin{align*}
T_{c}[a_1,a_2] &= [[[a_1,b_1],a_1]a_1, a_2]\\
&= [a_1,a_2] + [[[a_1,b_1],a_1],a_2]\\
& = [a_1,a_2] + (a_1-1)(a_2-1)[a_1,b_1].
\end{align*}
\end{proof}
\begin{figure}[h]
\centering{
\includegraphics[scale=0.5]{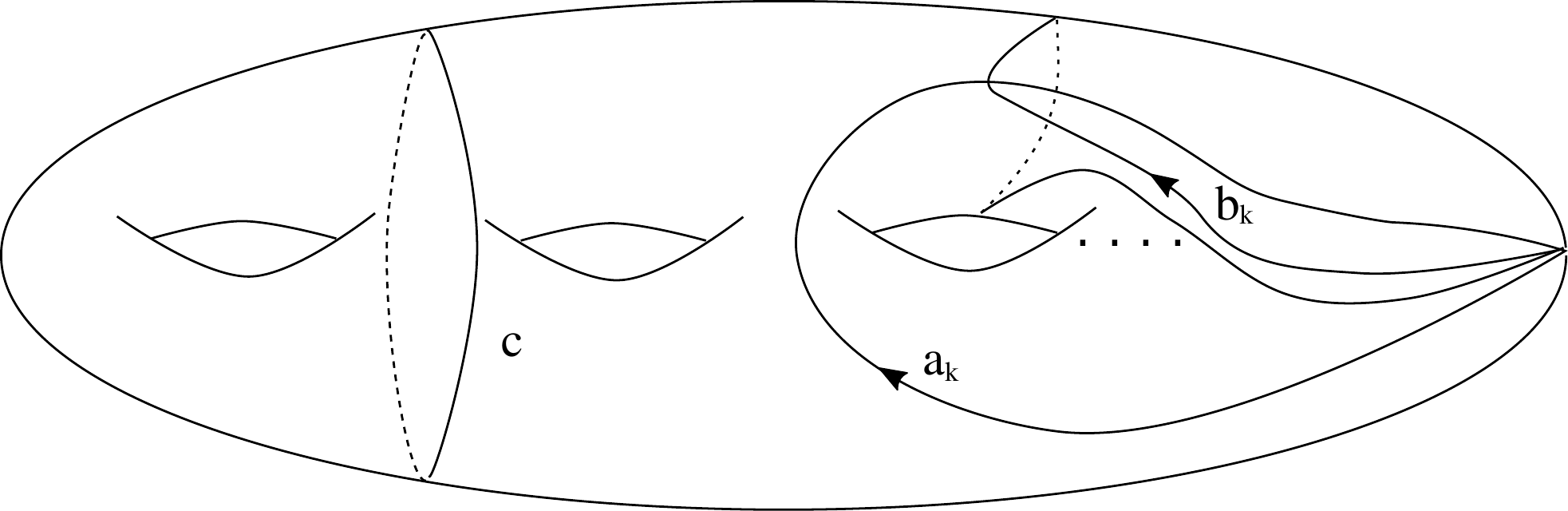}}
\caption{Generators for $\pi$ and a separating curve.}\label{figure} 
\end{figure} 

From now on, we denote the element $(a_1-1)(a_2-1)[a_1,b_1]$ by $w$. The following property of $w$ is crucial.
\begin{lemma}
The vector $w\in A$ is non-zero.
\end{lemma}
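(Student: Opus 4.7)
The plan is to work inside the Fox-calculus realization $A \hookrightarrow (\QQ H)^{\oplus 2g}/(\Theta)$ established in Section 4.1, write down the image of $w$ explicitly, and show it is non-zero by a divisibility argument in the integral domain $\QQ H = \QQ[\ZZ^{2g}]$.

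First, I would compute the Fox derivatives of the commutator $[a_1, b_1] = [x_1, x_{g+1}]$. A standard application of the Fox product rule, together with $D_j(y^{-1}) = -y^{-1}D_j(y)$ and reduction modulo the commutator subgroup of $\pi$, gives
\begin{equation*}
D_j([x_i, x_k]) = (1-x_k)\,\delta_{ij} + (x_i-1)\,\delta_{jk}
\end{equation*}
for $i \neq k$. This identifies $[a_1, b_1]$ with $(1-b_1)e_1 + (a_1-1)e_{g+1}$ in $(\QQ H)^{\oplus 2g}/(\Theta)$, and multiplying by $(a_1-1)(a_2-1)$ gives the representative
\begin{equation*}
(a_1-1)(a_2-1)(1-b_1)\,e_1 + (a_1-1)^2(a_2-1)\,e_{g+1}
\end{equation*}
for $w$.

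Next, I would argue by contradiction: if this element were zero in the quotient, it would equal $Q\,\Theta$ for some $Q \in \QQ H$. Matching the $e_1$-components and cancelling the non-zero factor $1-b_1$ in the integral domain $\QQ H$ forces $Q = (a_1-1)(a_2-1)$; matching the $e_{g+1}$-components is consistent with this choice. But the $e_2$-component of $Q\,\Theta$ is $(a_1-1)(a_2-1)(1-b_2)$, which is non-zero in $\QQ H$, whereas the $e_2$-component of the representative of $w$ vanishes. This contradiction shows $w$ is non-zero in $A$.

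The only subtle point is the Fox-derivative computation itself, which is standard but a bit fiddly; everything else is bookkeeping in an integral domain. A representation-theoretic alternative, more in keeping with the style of this section, would be to show the stronger fact that $[w]$ is non-zero in $\text{gr}_2 A$: by formula (\ref{formula}), $[w]$ corresponds to $a_1 a_2 \otimes a_1 \wedge b_1 \in \text{Sym}^2 H_\QQ \otimes \Lambda^2 H_\QQ$, and applying a symplectic transvection such as $T_{a_1+a_2}$ to this element produces $a_1 a_2 \otimes a_1 \wedge a_2$, whose weight $2\lambda_2$ appears in none of the other summands $V(2\lambda_1), V(\lambda_1+\lambda_3), V(\lambda_2)$ listed in the preceding proposition. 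Hence by Schur's lemma this element maps to a non-zero class in $\text{gr}_2 A \cong V(2\lambda_1+\lambda_2)$, which by $\Sp_g(\ZZ)$-equivariance forces $[w] \neq 0$.
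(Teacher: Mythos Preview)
Both of your approaches are correct, but they are considerably more elaborate than the paper's argument. The paper simply observes that $[a_1,b_1]$ has non-zero image $a_1\wedge b_1$ under the projection $A\to A_H\cong\Lambda_0^2 H_\QQ$, and then invokes the torsion-freeness of $A$ over $\QQ H$ (already established as Corollary~\ref{lemma4.3}) to conclude that multiplying by the non-zero element $(a_1-1)(a_2-1)\in\QQ H$ preserves non-vanishing. Your first approach is essentially a by-hand reproof of torsion-freeness in this particular instance: you embed $w$ explicitly in $(\QQ H)^{\oplus 2g}/(\Theta)$ and rule out $w\in(\Theta)$ by comparing coordinates, which duplicates the work of Lemma~\ref{lemma3.1}.

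Your second approach is more interesting, because it actually proves \emph{more} than the lemma asserts. You show that the class of $w$ is already non-zero in $\text{gr}_2 A$, i.e.\ that $w\notin J^3A$; in the paper's notation this says $N_g=2$ and therefore answers Question~1 affirmatively. The key step---that $a_1a_2\otimes a_1\wedge a_2$ lies in the $\Sp_g(\ZZ)$-span of $a_1a_2\otimes a_1\wedge b_1$ via the transvection $T_{a_1+a_2}$, and that its weight $2\lambda_2=2\epsilon_1+2\epsilon_2$ does not occur in any of the kernel summands $V(2\lambda_1)$, $V(\lambda_1+\lambda_3)$, $V(\lambda_2)$---checks out (in each case the difference between the highest weight and $2\lambda_2$ fails to be a non-negative combination of simple roots). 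The paper deliberately avoids asserting $N_g=2$, so this is a genuine strengthening; it would be worth recording separately rather than burying inside the proof of a weaker lemma.
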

\begin{proof}
Observe first that $[a_1,b_1]\neq 0$, as it projects to the non-zero element $a_1\wedge b_1$ via the canonical map $$A\rightarrow A_H\cong \Lambda_0^2H_{\QQ},$$ where $\Lambda_0^2H_{\QQ}$ denotes the unique $\Sp_g(\QQ)$-invariant complement of the trivial representation in $\Lambda^2 H_{\QQ}$. Since $A$ is a torsion-free $\QQ H$-module, we therefore also have $(a_1-1)(a_2-1)[a_1,b_1]\neq 0$.
\end{proof}
Since $A$ is a finitely generated torsion-free $\QQ H$-module, the Krull Intersection Theorem implies the following.
\begin{lemma}\label{lemmatorsion}
For each $g\geq 2$ there exists a unique integer $N_g\geq 2$ with the property that $w$ is contained in $J^{N_g}A$ but not in $J^{{N_g}+1}A$.
\end{lemma}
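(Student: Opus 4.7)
The plan is to extract $N_g$ as the largest integer $n$ for which $w \in J^n A$, and verify that this quantity is both finite and at least $2$.

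First I would establish the lower bound. Since $a_1 - 1$ and $a_2 - 1$ both lie in the augmentation ideal $J$, and $[a_1,b_1]$ is an element of $A$, the expression $w = (a_1-1)(a_2-1)[a_1,b_1]$ visibly lies in $J^2 A$. This guarantees $w \in J^n A$ for at least $n = 2$, so if $N_g$ exists it must satisfy $N_g \geq 2$.

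Next I would establish that the set $S = \{n \geq 0 : w \in J^n A\}$ is bounded above. This is immediate from the Krull Intersection Theorem argument already used in the proof of Proposition \ref{proposition3}: the module $A$ is finitely generated and torsion-free over the Noetherian ring $\QQ H$, so
\begin{equation*}
\bigcap_{n=1}^{\infty} J^n A = 0.
\end{equation*}
Because $w$ is non-zero by the preceding lemma, it cannot lie in this intersection, hence there exists some $n$ with $w \notin J^n A$. Therefore $S$ is a bounded subset of $\NN$.

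With $S$ non-empty (as $2 \in S$) and bounded, define $N_g := \max S$. Then by construction $w \in J^{N_g} A$ while $w \notin J^{N_g + 1} A$, and $N_g \geq 2$. Uniqueness is automatic from this characterization as a maximum. The only subtlety worth flagging is that one must invoke Krull rather than argue naively, since the $J$-adic filtration of $A$ itself does not stabilize (by Proposition \ref{proposition3}) and individual elements could a priori lie arbitrarily deep; torsion-freeness of $A$ is exactly what rules this out for the particular element $w$.
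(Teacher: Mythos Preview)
Your argument is correct and matches the paper's approach: the paper simply states that the lemma follows from the Krull Intersection Theorem together with the fact that $A$ is a finitely generated torsion-free $\QQ H$-module, which is exactly the reasoning you have spelled out in detail. The lower bound $N_g \geq 2$ comes, as you note, directly from the visible form of $w$.
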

Let $N$ denote the integer that is guaranteed to exist by Lemma \ref{lemmatorsion}. Then the class of $w$ in $\text{gr}_{N_g}A$ is non-zero. We claim that it lies in the kernel of the projection onto $\text{gr}_{N_g}A_{K_{g,1}}$. To see this, observe that there is a commutative diagram
\begin{equation*}
\xymatrix{
A\ar[r]^{p}&A_{K_{g,1}}\\
J^{N_g}A\ar[r]^{p'}\ar[u] & J^{N_g}A_{K_{g,1}}\ar[u]
}
\end{equation*}
where the vertical maps are inclusions and the horizontal maps are the canonical projections. Because $w\in J_{K_{g,1}}A$ by Lemma \ref{lemmakg1}, we have $p(w) = 0$. By commutativity of the diagram, we obtain $p'(w) = 0$. It follows that the class of $w$ in $\text{gr}_{N_g}A$ is mapped to $0$ in $\text{gr}_{N_g}A_{K_{g,1}}$. 

\begin{corollary}\label{stab}
The $J$-adic filtration of $A_{K_{g,1}}$ stabilizes.
\end{corollary}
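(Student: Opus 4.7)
The plan is to harvest the work already done in the preceding lemmas and then propagate the vanishing of a single graded piece up the filtration. By Lemma \ref{lemmakg1}, the element $w = (a_1-1)(a_2-1)[a_1,b_1]$ lies in $J_{K_{g,1}}A$, so its image under $p \colon A \to A_{K_{g,1}}$ is zero. On the other hand, by Lemma \ref{lemmatorsion} its class in $\text{gr}_{N_g}A$ is non-zero. The commutative square
\begin{equation*}
\xymatrix{
A\ar[r]^{p}&A_{K_{g,1}}\\
J^{N_g}A\ar[r]^{p'}\ar[u] & J^{N_g}A_{K_{g,1}}\ar[u]
}
\end{equation*}
then forces the class of $w$ to lie in the kernel of $\psi_{N_g}\colon \text{gr}_{N_g}A\to \text{gr}_{N_g}A_{K_{g,1}}$.

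Next I would invoke Proposition \ref{proposition2}, which identifies $\text{gr}_{N_g}A$ with the irreducible $\Sp_g(\ZZ)$-module $V(N_g\lambda_1+\lambda_2)$. Since $\psi_{N_g}$ is $\Sp_g(\ZZ)$-equivariant and surjective with non-trivial kernel, Schur's Lemma (applied using the remark that finite-dimensional irreducible $\Sp_g(\QQ)$-representations remain irreducible for $\Sp_g(\ZZ)$) gives $\psi_{N_g}=0$, i.e.\ $\text{gr}_{N_g}A_{K_{g,1}}=0$.

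Finally, I would translate this vanishing into stability. By definition $\text{gr}_{N_g}A_{K_{g,1}} = J^{N_g}A_{K_{g,1}}/J^{N_g+1}A_{K_{g,1}}$, so we obtain the equality $J^{N_g}A_{K_{g,1}}=J^{N_g+1}A_{K_{g,1}}$. Multiplying both sides by $J$ yields $J^{N_g+1}A_{K_{g,1}}=J^{N_g+2}A_{K_{g,1}}$, and by induction on $n$ we conclude $J^nA_{K_{g,1}}=J^{N_g}A_{K_{g,1}}$ for every $n\geq N_g$. This is exactly the statement that the $J$-adic filtration on $A_{K_{g,1}}$ stabilizes.

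There is no real obstacle here; all substantive ingredients — producing the nonzero element $w$ in precisely the right filtration degree, showing it becomes trivial modulo the $K_{g,1}$-action, and identifying $\text{gr}_{n}A$ as an irreducible symplectic module — have already been established. The only subtlety is the passage from a single vanishing graded piece to stability, which is purely formal once one observes that $J\cdot $ propagates the equality of consecutive filtration terms upward.
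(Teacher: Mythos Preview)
Your proposal is correct and follows essentially the same argument as the paper: exhibit $w$ as a nonzero class in $\text{gr}_{N_g}A$ that dies under $\psi_{N_g}$, invoke irreducibility (Proposition~\ref{proposition2}) and Schur's Lemma to kill $\psi_{N_g}$ entirely, and then propagate $J^{N_g}A_{K_{g,1}} = J^{N_g+1}A_{K_{g,1}}$ upward. The only difference is that you spell out the induction step (multiplying by $J$) explicitly, whereas the paper simply asserts the stabilization.
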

\begin{proof}
Since $w$ does not vanish in $\text{gr}_{N_g}A$, it follows from Schur's Lemma that $\psi_{N_g}$ vanishes identically. This implies that $J^{N_g}A_{K_{g,1}} = J^kA_{K_{g,1}}$ for all $k\geq N_g$.
\end{proof}
\begin{corollary}\label{cor5}
The $J$-adic completion $A_{K_{g,1}}^{\wedge}$ is isomorphic, as a $\QQ H$-module, to $A/J^{N_g}A$. In particular, it is a finite-dimensional vector space.
\end{corollary}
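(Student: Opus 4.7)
The plan is to unpack the definition of $J$-adic completion, invoke Corollary \ref{stab} to reduce the inverse limit to a finite stage, and then read off the finite-dimensionality from Proposition \ref{proposition2}.

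First, by definition $A_{K_{g,1}}^{\wedge} = \varprojlim_k A_{K_{g,1}}/J^k A_{K_{g,1}}$. Corollary \ref{stab} states that $J^k A_{K_{g,1}} = J^{N_g} A_{K_{g,1}}$ for every $k \geq N_g$, so the transition maps in the inverse system are identities past stage $N_g$. Hence the inverse limit collapses to the single term
\begin{equation*}
A_{K_{g,1}}^{\wedge} \;\cong\; A_{K_{g,1}}/J^{N_g} A_{K_{g,1}}.
\end{equation*}
Using the presentation $A_{K_{g,1}} = A/J_{K_{g,1}} A$, this may be rewritten as $A/(J^{N_g} A + J_{K_{g,1}} A)$, which exhibits it as a quotient of $A/J^{N_g} A$, giving the claimed identification as a $\QQ H$-module.

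Second, to establish finite-dimensionality, I would consider the finite filtration
\begin{equation*}
0 \;\subseteq\; J^{N_g-1}A/J^{N_g}A \;\subseteq\; J^{N_g-2}A/J^{N_g}A \;\subseteq\; \cdots \;\subseteq\; A/J^{N_g}A
\end{equation*}
whose successive quotients are precisely the graded pieces $\text{gr}_i A$ for $i = 0, 1, \ldots, N_g - 1$. By Proposition \ref{proposition2}, each $\text{gr}_i A$ is isomorphic as an $\Sp_g(\ZZ)$-module to the finite-dimensional irreducible representation $V(i\lambda_1 + \lambda_2)$. A finite filtration with finite-dimensional subquotients has a finite-dimensional total space, so $A/J^{N_g}A$ is finite-dimensional, and hence so is the further quotient $A_{K_{g,1}}^{\wedge}$.

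The main ``obstacle'' here has already been surmounted in Corollary \ref{stab}: the existence of the critical integer $N_g$ at which the $J$-adic filtration on $A_{K_{g,1}}$ stabilizes is precisely what lets the inverse limit be replaced by a single finite-dimensional quotient. Once this is in hand, the remainder of the argument is formal: it is simply the observation that a finitely filtered module with finite-dimensional graded pieces is finite-dimensional, applied to the explicit irreducible description of $\text{gr}_i A$ supplied by Proposition \ref{proposition2}.
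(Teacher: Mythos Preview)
Your approach is essentially the paper's: the paper's proof reads in its entirety ``This follows readily from Corollary \ref{stab} along with standard properties of completions,'' and you have faithfully unpacked those standard properties. Your derivation of $A_{K_{g,1}}^{\wedge}\cong A_{K_{g,1}}/J^{N_g}A_{K_{g,1}}$ from stabilization is correct, and your filtration argument using Proposition \ref{proposition2} cleanly gives the finite-dimensionality of $A/J^{N_g}A$, hence of its quotient $A_{K_{g,1}}^{\wedge}$.

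One point deserves a caveat. You write that exhibiting $A_{K_{g,1}}^{\wedge}\cong A/(J^{N_g}A+J_{K_{g,1}}A)$ as a \emph{quotient} of $A/J^{N_g}A$ ``gives the claimed identification as a $\QQ H$-module.'' Strictly speaking, this only yields a surjection $A/J^{N_g}A\twoheadrightarrow A_{K_{g,1}}^{\wedge}$; the literal isomorphism stated in the corollary would further require $J_{K_{g,1}}A\subseteq J^{N_g}A$, which you have not shown (and which the paper does not prove prior to this point either---indeed it asserts this inclusion only later, in Proposition \ref{propinf}, as a consequence of the present corollary). For the finite-dimensionality conclusion---the ``In particular'' clause, which is all that is invoked in the proof of Theorem \ref{finite}---your quotient argument is entirely sufficient. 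Just be aware that the sentence as written overstates what the surjection alone delivers.
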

\begin{proof}
This follows readily from Corollary \ref{stab} along with standard properties of completions.
\end{proof}
We conclude this section with the proof of Theorem \ref{finite}. By Corollary \ref{corollarydimca} there is an extension
\begin{equation}\label{ext}
1\rightarrow \pi^{(2)}\rightarrow K_{g,1}\rightarrow K_g\rightarrow 1
\end{equation}
giving rise to a 5-term exact sequence with a segment
\begin{equation}\label{5termexact8}
H_1(\pi^{(2)},\QQ)_{K_{g,1}}\rightarrow H_1(K_{g,1},\QQ)\rightarrow H_1(K_g,\QQ)\rightarrow 0.
\end{equation}
Since (\ref{ext}) is $\pi$-equivariant, 
(\ref{5termexact8}) is a sequence of $\QQ H$-modules. Note that $H$ acts trivially on $H_1(K_g,\QQ)$.
\begin{proof}[Proof of Theorem \ref{finite}]
When $g\geq 4$, $H_1(K_g,\QQ)$ is a finite-dimensional vector space and therefore a finitely generated $\QQ H$-module. Since $H_1(\pi^{(2)},\QQ)$ is a finitely generated $\QQ H$-module, so is $H_1(\pi^{(2)},\QQ)_{K_{g,1}}$. The $J$-adic completion functor is exact on the category of finitely generated $\QQ H$-modules, so we have an exact sequence
$$\left(H_1(\pi^{(2)},\QQ)_{K_{g,1}}\right)^{\wedge}\rightarrow H_1(K_{g,1},\QQ)^{\wedge}\rightarrow H_1(K_g,\QQ)\rightarrow 0.$$
Since $\left(H_1(\pi^{(2)},\QQ)_{K_{g,1}}\right)^{\wedge}$ is finite-dimensional by Corollary \ref{cor5}, $H_1(K_{g,1},\QQ)^{\wedge}$ must be finite-dimensional as well.
\end{proof}

We conclude this section with a discussion of the structure of $J_{K_{g,1}}A$. Our first observation concerns how this subspace is situated relative to the $J$-adic filtration.
\begin{lemma}
The submodule $J_{K_{g,1}}A$ is contained in $J^2A$.
\end{lemma}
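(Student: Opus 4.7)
The plan is to show that every $\varphi \in K_{g,1}$ acts trivially on the quotient $A/J^2 A$. Once established, this gives $(\varphi - 1)a \in J^2 A$ for all $a \in A$, and since $J_{K_{g,1}}$ is $\QQ$-spanned by elements of the form $\varphi - 1$, the inclusion $J_{K_{g,1}} A \subseteq J^2 A$ follows immediately.

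The first step is to identify $A/J^2 A$ as a natural quotient of $\pi^{(2)}/\pi^{(4)}$. For $x \in \pi$ and $v \in \pi^{(2)}$, the commutator $[x, v]$ lies in $\pi^{(3)}$ and maps to $(\bar x - 1) \cdot \tilde v$ in $A$, where $\tilde v$ denotes the image of $v$. Consequently, the image of $\pi^{(3)} = [\pi, \pi^{(2)}]$ in $A$ is contained in $JA$. Iterating the same formula, the image of $\pi^{(4)} = [\pi, \pi^{(3)}]$ is contained in $J \cdot JA = J^2 A$. Since the natural map $\pi^{(2)} \to A$ is surjective, we obtain an induced surjection $\pi^{(2)}/\pi^{(4)} \twoheadrightarrow A/J^2 A$, reducing the claim to showing that $K_{g,1}$ acts trivially on $\pi^{(2)}/\pi^{(4)}$.

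For this last step, note that every $\varphi \in K_{g,1}$ acts trivially on $\pi/\pi^{(3)}$ by definition, so $\varphi(x) = x \cdot u_x$ with $u_x \in \pi^{(3)}$ for every $x \in \pi$. A direct expansion of $\varphi([x,y]) = [x u_x, y u_y]$ using the identities $[ab,c] = a[b,c]a^{-1}[a,c]$ and $[a,bc] = [a,b] \cdot b[a,c]b^{-1}$ produces $[x, y]$ multiplied by a product of conjugates of commutators of the form $[u_x, -]$ and $[-, u_y]$. Each such commutator lies in $[\pi, \pi^{(3)}] = \pi^{(4)}$ by the standard inclusion $[\pi^{(i)}, \pi^{(j)}] \subseteq \pi^{(i+j)}$, so $\varphi([x, y]) \equiv [x, y] \pmod{\pi^{(4)}}$. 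Since $\pi^{(4)}$ is normal and every $v \in \pi^{(2)}$ is a product of commutators, this congruence extends by induction to $\varphi(v) \equiv v \pmod{\pi^{(4)}}$ for all $v \in \pi^{(2)}$, completing the argument. The only genuine obstacle is the commutator bookkeeping in this last step, but it is entirely routine once the reduction to $\pi^{(2)}/\pi^{(4)}$ is in place.
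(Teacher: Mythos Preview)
Your proof is correct and follows essentially the same approach as the paper's. Both arguments write $\varphi(x)=x\cdot u_x$ with $u_x\in\pi^{(3)}$, expand $\varphi([x,y])$ via commutator identities, and use that the image of $\pi^{(3)}$ in $A$ lies in $JA$; you package this as ``$K_{g,1}$ acts trivially on $\pi^{(2)}/\pi^{(4)}$ and $\pi^{(4)}$ maps into $J^2A$,'' while the paper writes out the resulting identity $(f-1)[x_1,x_2]=(x_1-1)\cdot\delta_2-(x_2-1)\cdot\delta_1$ in $A$ directly, but the content is the same.
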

\begin{proof}
Suppose that $x_1,x_2\in \pi$. By definition, any element $f\in K_{g,1}$ satisfies $f(x_j)x_j^{-1}\in \pi^{(3)}$. Let $\delta_j\in \pi^{(3)}$ be elements with the property that $f(x_j) = \delta_j x_j$. Then by the Hall-Witt identity we obtain
\begin{equation*}
f([x_1,x_2]) = [\delta_1x_1,\delta_2x_2] = [x_1,\delta_2]^{\delta_1^{-1}}[x_1,x_2]^{\delta_2^{-1}\delta_1^{-1}}[\delta_1,\delta_2][\delta_1,x_2]^{\delta_2^{-1}}.
\end{equation*}
This implies that the element $f-1\in J_{K_{g,1}}$ satisfies 
\begin{equation}\label{commutator}
(f-1)[x,y] = (x_1-1)\cdot \delta_2 - (x_2-1)\cdot \delta_1\in H_1(\pi^{(2)},\QQ).
\end{equation}
Note that the image of the natural map $\pi^{(3)}\rightarrow A$ is contained in the subspace $JA$. This observation, combined with (\ref{commutator}), implies that each $f\in K_{g,1}$ satisfies $(f-1)[x,y]\in J^2A$.
\end{proof}
We remark that the same method of proof demonstrates that any $f\in K_{g,1}(n)$ satisfies $(f-1)[x,y]\in J^nA$.
\\\\\indent 
It appears to be a very difficult problem to compute the dimension of the space of coinvariants $A_{K_{g,1}}$. Since $J^2A$ has finite codimension in $A$, this problem is equivalent to computing the codimension of $J_{K_{g,1}}A$ in $J^2A$. This problem can be solved ``infinitesimally" in the sense that we can work out the precise relationship between these subspaces after passing to completions. The following is easily deduced from Corollary \ref{cor5}.

\begin{proposition}\label{propinf}
The inclusion $J_{K_{g,1}}A\subset J^{N_g}A$ induces an isomorphism
$$ \left(J_{K_{g,1}}A\right)^{\wedge} \cong \left(J^{N_g}A\right)^{\wedge}$$
on $J$-adic completions.
\end{proposition}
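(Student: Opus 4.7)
The plan is to exploit the exactness of $J$-adic completion on finitely generated $\QQ H$-modules together with the identification of $A_{K_{g,1}}^{\wedge}$ already furnished by Corollary \ref{cor5}. First I would write down two short exact sequences of finitely generated $\QQ H$-modules,
$$0 \to J_{K_{g,1}} A \to A \to A_{K_{g,1}} \to 0 \quad \text{and} \quad 0 \to J^{N_g} A \to A \to A/J^{N_g} A \to 0,$$
and note that they fit in a commutative ladder through the inclusion $J_{K_{g,1}} A \subset J^{N_g} A$ asserted by the proposition. This inclusion is itself a byproduct of Corollary \ref{cor5}: the canonical map $A \to A_{K_{g,1}}^{\wedge}$ factors through $A_{K_{g,1}}$, so its kernel contains $J_{K_{g,1}} A$, and under the isomorphism $A_{K_{g,1}}^{\wedge} \cong A/J^{N_g} A$ of Corollary \ref{cor5} this kernel is precisely $J^{N_g} A$.

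Next I would apply $J$-adic completion. Since $\QQ H$ is a Laurent polynomial ring over $\QQ$ and hence Noetherian, completion is exact on the category of finitely generated $\QQ H$-modules. Moreover, $A/J^{N_g} A$ is annihilated by $J^{N_g}$ and is therefore already $J$-adically complete. The two completed sequences then read
$$0 \to (J_{K_{g,1}} A)^{\wedge} \to A^{\wedge} \to A_{K_{g,1}}^{\wedge} \to 0 \quad \text{and} \quad 0 \to (J^{N_g} A)^{\wedge} \to A^{\wedge} \to A/J^{N_g} A \to 0,$$
and they are linked by the identity on $A^{\wedge}$ in the middle column and by the isomorphism of Corollary \ref{cor5} on the right. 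A Five Lemma chase then forces the map $(J_{K_{g,1}} A)^{\wedge} \to (J^{N_g} A)^{\wedge}$ induced by the inclusion to be an isomorphism.

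The main subtlety, rather than a genuine obstacle, is verifying that the isomorphism of Corollary \ref{cor5} is indeed induced by the canonical projection $A \to A_{K_{g,1}}$, so that the right-hand square of the ladder commutes; this amounts to unwinding the proof of Corollary \ref{cor5}, where the identification $A_{K_{g,1}}^{\wedge} \cong A_{K_{g,1}}/J^{N_g} A_{K_{g,1}} \cong A/(J_{K_{g,1}}A + J^{N_g}A)$ is made via the canonical projection and the stabilization of the filtration from Corollary \ref{stab}. Once this compatibility is explicit, the rest of the argument is purely formal.
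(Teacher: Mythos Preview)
Your argument is correct and is a careful fleshing-out of what the paper leaves implicit: the paper simply states that the proposition ``is easily deduced from Corollary \ref{cor5}'' and gives no further detail, so your Five Lemma argument via the two completed short exact sequences is exactly the kind of deduction intended. Your identification of the one genuine point requiring care---that the isomorphism of Corollary \ref{cor5} is the map induced by the canonical projection, so that the right-hand square of the ladder commutes---is apt, and your sketch of how to verify it through Corollary \ref{stab} is on target.
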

\begin{question}
Does $N_g = 2$?
\end{question}

\begin{question}\label{question1}
Is there an equality $J_{K_{g,1}}A = J^{N_g}A$?
\end{question}
An affirmative answer to Question \ref{question1} would imply that $A_{K_{g,1}}$ is finite-dimensional. 

\vspace{.1in}
Since Dimca-Papadima's work \cite{dimcapapa12} implies that $H_1(K_g,\QQ)$ is finite-dimensional for $g\geq 4$, the following is deduced at once from the exact sequence (\ref{5termexact8}). 
\begin{proposition}
If $A_{K_{g,1}}$ is finite-dimensional, then $H_1(K_{g,1},\QQ)$ is finite-dimensional as long as $g\geq 4$. 
\end{proposition}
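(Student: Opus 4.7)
The plan is to read this off directly from the five-term exact sequence already recorded as (\ref{5termexact8}). That sequence, applied to the extension $1\to\pi^{(2)}\to K_{g,1}\to K_g\to 1$ of Corollary \ref{corollarydimca}, gives
\begin{equation*}
A_{K_{g,1}}=H_1(\pi^{(2)},\QQ)_{K_{g,1}}\longrightarrow H_1(K_{g,1},\QQ)\longrightarrow H_1(K_g,\QQ)\longrightarrow 0,
\end{equation*}
so $H_1(K_{g,1},\QQ)$ is an extension (in the sense of vector spaces) of a quotient of $H_1(K_g,\QQ)$ by a quotient of $A_{K_{g,1}}$.

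Thus the plan has just two inputs. First, invoke the Dimca--Papadima theorem from \cite{dimcapapa12}, which gives the finite-dimensionality of $H_1(K_g,\QQ)$ as soon as $g\geq 4$; this is the hypothesis on the genus in the statement. Second, use the standing assumption that $A_{K_{g,1}}$ is finite-dimensional to bound the image of the left-hand map.

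Concretely, from exactness the cokernel of $A_{K_{g,1}}\to H_1(K_{g,1},\QQ)$ injects into $H_1(K_g,\QQ)$, hence is finite-dimensional, while the image of $A_{K_{g,1}}$ is a quotient of $A_{K_{g,1}}$ and therefore also finite-dimensional. A short exact sequence of $\QQ$-vector spaces whose outer terms are finite-dimensional forces the middle term to be finite-dimensional, proving the proposition. There is no genuine obstacle here: the entire content of the argument is the combination of Dimca--Papadima's input with the hypothesis, so the only thing to be careful about is the bookkeeping of the five-term sequence, which has already been set up in the discussion preceding the statement.
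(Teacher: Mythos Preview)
Your proof is correct and follows exactly the paper's approach: the paper simply remarks that the proposition ``is deduced at once from the exact sequence (\ref{5termexact8})'' together with Dimca--Papadima's finite-dimensionality of $H_1(K_g,\QQ)$ for $g\geq 4$, which is precisely what you do. One tiny sharpening: from the exact sequence the cokernel of $A_{K_{g,1}}\to H_1(K_{g,1},\QQ)$ is actually \emph{isomorphic} to $H_1(K_g,\QQ)$, not merely injected into it, but this does not affect the argument.
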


\section{An Extension of a Result of Akita}
By a direct generalization of the arguments of Akita, we will show that the higher Johnson subgroups must have infinite-dimensional homology in at least some degrees. 
\begin{lemma}[\cite{akita97}]\label{cw}
Let  $F\rightarrow E\rightarrow B$ be a fibration with $B$ a finite CW complex. If $H_{\bullet}(F,\QQ)$ is finite-dimensional, then so it $H_{\bullet}(E,\QQ)$ 
\end{lemma}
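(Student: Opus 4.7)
The natural tool here is the Serre (homology) spectral sequence of the fibration $F \to E \to B$, which has $E^2$ page
\begin{equation*}
E^2_{p,q} = H_p(B,\mathcal{H}_q(F,\QQ)) \Longrightarrow H_{p+q}(E,\QQ),
\end{equation*}
where $\mathcal{H}_q(F,\QQ)$ denotes the local coefficient system on $B$ associated with the monodromy action of $\pi_1(B)$ on the fibre homology. The plan is simply to observe that, under the hypotheses, this $E^2$ page has only finitely many nonzero entries, each of which is finite-dimensional; the result then follows from convergence.

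First I would unpack the hypothesis ``$H_\bullet(F,\QQ)$ is finite-dimensional'' as meaning that $\bigoplus_q H_q(F,\QQ)$ is finite-dimensional, so each individual $H_q(F,\QQ)$ is a finite-dimensional $\QQ$-vector space and vanishes for all sufficiently large $q$. Next, since $B$ is a finite CW complex, its cellular chain complex has finitely many cells and is concentrated in degrees $0$ through $\dim B$; consequently, for any (finite-dimensional) local coefficient system $\mathcal{V}$ on $B$, the homology $H_p(B,\mathcal{V})$ is finite-dimensional and vanishes for $p > \dim B$. Applying this with $\mathcal{V} = \mathcal{H}_q(F,\QQ)$ shows that $E^2_{p,q}$ is finite-dimensional for all $(p,q)$ and is nonzero for only finitely many pairs.

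Finally, the differentials on each page are $\QQ$-linear, so every $E^r_{p,q}$ is a subquotient of $E^2_{p,q}$ and the spectral sequence degenerates at a finite page. In particular $E^\infty_{p,q}$ is finite-dimensional and supported on a finite set of bidegrees. Since the spectral sequence converges to a filtration of $H_n(E,\QQ)$ whose associated graded is $\bigoplus_{p+q=n} E^\infty_{p,q}$, we conclude that each $H_n(E,\QQ)$ is finite-dimensional and vanishes for $n > \dim B + \max\{q : H_q(F,\QQ)\neq 0\}$, so $H_\bullet(E,\QQ)$ is finite-dimensional.

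There is no real obstacle here: the only point worth a moment of care is handling the local coefficients when $\pi_1(B)$ acts nontrivially on $H_q(F,\QQ)$, but this is addressed by the standard fact that $H_p(B,\mathcal{V})$ is computed by a finite cellular chain complex with coefficients in $\mathcal{V}$ and hence is finite-dimensional whenever $\mathcal{V}$ is.
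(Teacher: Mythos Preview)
Your argument via the Serre spectral sequence is correct and is the standard proof of this fact. Note, however, that the paper does not supply its own proof of this lemma: it is quoted as a lemma from Akita \cite{akita97} and simply used as a black box in the proof of Proposition~\ref{proposition1}. So there is no ``paper's proof'' to compare against; your spectral sequence argument is exactly the expected justification and would serve perfectly well if one wished to include a proof.
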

\begin{proof}[Proof of Proposition \ref{proposition1}]
For each $n\geq 2$ there is a short exact sequence 
\begin{equation*}
1\rightarrow K_{g,1}(n+1)\rightarrow K_{g,1}(n)\xrightarrow{\tau_{g,1}(n)} V_{\ZZ}\rightarrow 1
\end{equation*}
where $V_{\ZZ}\subset \text{Hom}(H, \cLL_n(\pi))$ is a free abelian group of finite rank. Thus for each $n$ there is a fibration of classifying spaces of the form
\begin{equation*}
BK_{g,1}(n+1)\rightarrow BK_{g,1}(n)\rightarrow \TT_n
\end{equation*}
where $\TT_n$ is a compact torus of dimension $\text{rank}_{\ZZ}(V_{\ZZ})$. By Akita, it is known that $H_{\bullet}(K_{g,1},\QQ)$ has infinite-dimension as long as $g\geq 7$. Appropriately modified statements hold with $K_g(n)$ in place of $K_{g,1}(n)$. An inductive argument combined with Lemma \ref{cw} finishes the proof. 
\end{proof}
We can say more in genus 2. Since $K_2 = T_2$ is a free group by \cite{mess1992}, the subgroups $K_2(n)$ are all free when $n\geq 2$. 
\begin{proposition}
For each $n\geq 2$ there is a splitting $K_{2,1}(n)\cong K_2(n)\ltimes \pi^{(n)}$.
\end{proposition}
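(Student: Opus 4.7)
The plan is to apply Corollary \ref{corollarydimca} specialized to $g=2$, which gives the short exact sequence
\begin{equation*}
1 \to \pi^{(n)} \xrightarrow{\cP} K_{2,1}(n) \to K_2(n) \to 1.
\end{equation*}
To upgrade this to a semidirect product decomposition, it suffices to produce a set-theoretic group homomorphism $K_2(n) \to K_{2,1}(n)$ that splits the surjection on the right.

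The key observation, already flagged in the paragraph preceding the statement, is that $K_2(n)$ is a free group. Indeed, by Mess \cite{mess1992} the Torelli group $K_2 = T_2$ is free of countably infinite rank, and since $K_2(n) \leq K_2$ for every $n \geq 2$, the Nielsen--Schreier theorem forces $K_2(n)$ itself to be free.

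The standard fact I would invoke is that every short exact sequence of groups with free quotient splits: free groups are projective objects in the category of groups, so the identity map $K_2(n) \to K_2(n)$ lifts along the surjection $K_{2,1}(n) \twoheadrightarrow K_2(n)$ by choosing preimages of a free basis. This lift gives the required section, and the conclusion $K_{2,1}(n) \cong K_2(n) \ltimes \pi^{(n)}$ follows from the usual correspondence between split short exact sequences and semidirect products. There is no real obstacle here; the whole argument rests on Mess's theorem together with the Birman-type sequence of Corollary \ref{corollarydimca}, and the only thing to verify is that the section can be chosen to land inside $K_{2,1}(n)$, which is automatic since $K_{2,1}(n)$ is precisely the total space of the extension.
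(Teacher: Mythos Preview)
Your argument is correct and matches the paper's approach exactly: the paper states the proposition immediately after observing that $K_2(n)$ is free (as a subgroup of the free group $K_2 = T_2$), leaving the splitting of the Birman-type sequence from Corollary \ref{corollarydimca} implicit. You have simply spelled out the standard projectivity argument that the paper omits.
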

Furthermore, since the cokernel of the inclusion $K_2(n+1)\rightarrow K_2(n)$ is a finitely generated group, we obtain the following by induction.
\begin{proposition}
For each $n\geq 2$, the rational homology groups $H_1(K_{2,1}(n),\QQ)$ and $H_1(K_2(n),\QQ)$ are infinite-dimensional.
\end{proposition}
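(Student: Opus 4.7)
The plan is to establish the statement for $K_2(n)$ first by induction on $n$, and then extract the statement for $K_{2,1}(n)$ from the semidirect product decomposition of the previous proposition. The base case $n=2$ is immediate: $K_2 = K_2(2) = T_2$ is a free group of countably infinite rank by Mess \cite{mess1992}, so $H_1(K_2,\QQ)$ is countably infinite-dimensional.

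For the inductive step, I would use the short exact sequence furnished by the $n$th Johnson homomorphism,
\begin{equation*}
1\to K_2(n+1)\to K_2(n)\xrightarrow{\tau_2(n)} V\to 1,
\end{equation*}
where $V=\mathrm{Im}(\tau_2(n))$ sits inside the finitely generated abelian group $\mathrm{Hom}(H,\cLL_{n+1}(\pi))/\cLL_n(\pi)$ and is therefore itself finitely generated abelian. The inflation–restriction (5-term) exact sequence in rational group homology then yields the segment
\begin{equation*}
H_1(K_2(n+1),\QQ)_V\longrightarrow H_1(K_2(n),\QQ)\longrightarrow H_1(V,\QQ)\longrightarrow 0.
\end{equation*}
Since $V$ is finitely generated abelian, $H_1(V,\QQ)$ is finite-dimensional, and the inductive hypothesis that $H_1(K_2(n),\QQ)$ is infinite-dimensional forces the image of the first map to be infinite-dimensional. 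Hence $H_1(K_2(n+1),\QQ)_V$ is infinite-dimensional, and because coinvariants are a quotient, $H_1(K_2(n+1),\QQ)$ must be infinite-dimensional as well.

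For the punctured case, the splitting $K_{2,1}(n)\cong K_2(n)\ltimes \pi^{(n)}$ produces a retraction $K_{2,1}(n)\twoheadrightarrow K_2(n)$, which induces a surjection $H_1(K_{2,1}(n),\QQ)\twoheadrightarrow H_1(K_2(n),\QQ)$; infinite-dimensionality then transfers from the quotient to the source. There is no serious obstacle here: the argument is a formal bookkeeping induction once one has Mess's theorem, the finite generation of the Johnson image (which uses only that $\cLL_{n+1}(\pi)=\pi^{(n+1)}/\pi^{(n+2)}$ is a finitely generated abelian group because $\pi$ is finitely generated), and the splitting from the previous proposition.
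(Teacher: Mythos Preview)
Your proof is correct and the argument for $K_2(n)$ is essentially the one the paper sketches in the sentence preceding the proposition: induct from Mess's theorem using that the Johnson image is a finitely generated abelian group, so the kernel of $H_1(K_2(n),\QQ)\to H_1(V,\QQ)$ has finite codimension.

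For $K_{2,1}(n)$ the paper takes a slightly different route: rather than deducing the result from the $K_2(n)$ case, it runs a second, parallel induction with base case $H_1(K_{2,1},\QQ)$ (infinite-dimensional again from Mess via the Birman sequence) and inductive step ``the image of $H_1(K_{2,1}(n+1),\QQ)$ in $H_1(K_{2,1}(n),\QQ)$ has finite codimension.'' Your approach---transferring infinite-dimensionality along the surjection $H_1(K_{2,1}(n),\QQ)\twoheadrightarrow H_1(K_2(n),\QQ)$---is cleaner and avoids the second induction. One minor remark: you do not actually need the semidirect product splitting for this step, since any group surjection already induces a surjection on $H_1$; the bare Birman-type sequence $1\to\pi^{(n)}\to K_{2,1}(n)\to K_2(n)\to 1$ of Corollary~\ref{corollarydimca} suffices.
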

\begin{proof}
It is clear from Mess's computation that $H_1(K_{2,1},\QQ)$ is infinite-dimensional. The key observation, now, is that for each $n$ the image of $H_1(K_{g,1}(n+1),\QQ)$ in $H_1(K_{g,1}(n),\QQ)$ has finite codimension. The result follows by induction.
\end{proof}

It turns out that if the space of coinvariants $A_{K_{g,1}}$ is infinite-dimensional, then Proposition \ref{proposition1} can be considerably sharpened. If this condition were satisfied, then for all $n\geq 2$ at least one of $H_1(K_{g,1}(n),\QQ)$ and $H_2(K_g(n),\QQ)$ would be infinite-dimensional. We will prove this in two steps.
\begin{lemma}\label{lemma3.2}
The space of coinvariants $A_{K_{g,1}}$ is infinite-dimensional if and only if the space of coinvariants $H_1(\pi^{(n)},\QQ)_{K_{g,1}}$ is infinite-dimensional for all $n\geq 3$. 
\end{lemma}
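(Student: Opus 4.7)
The plan is to inductively relate the coinvariant spaces $V_n := H_1(\pi^{(n)},\QQ)_{K_{g,1}}$ for $n \geq 2$ via the Lyndon-Hochschild-Serre five-term exact sequence of each extension $1 \to \pi^{(n+1)} \to \pi^{(n)} \to \cLL_n(\pi) \to 1$. The crucial reduction is that, by Corollary \ref{corollarydimca}, the push map identifies $\pi^{(n)}$ with a subgroup of $K_{g,1}$ for every $n \geq 2$, so the conjugation action of $\cLL_n(\pi) = \pi^{(n)}/\pi^{(n+1)}$ on $H_1(\pi^{(n+1)},\QQ)$ is the restriction of the ambient $K_{g,1}$-action. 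Consequently $(H_1(\pi^{(n+1)},\QQ)_{\cLL_n(\pi)})_{K_{g,1}} = V_{n+1}$, and applying the right-exact functor of $K_{g,1}$-coinvariants to the five-term sequence gives
$$V_{n+1} \to V_n \to H_1(\cLL_n(\pi),\QQ)_{K_{g,1}} \to 0.$$
Since $\cLL_n(\pi)$ is finitely generated abelian by \cite{labute}, the cokernel is finite-dimensional, so the image of $V_{n+1}$ in $V_n$ has finite codimension. For the forward implication, iterating this starting from $V_2 = A_{K_{g,1}}$ infinite-dimensional forces $V_n$ to be infinite-dimensional for every $n \geq 3$.

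For the backward implication I would argue the contrapositive: assuming $A_{K_{g,1}}$ is finite-dimensional, I would show $V_3$ is finite-dimensional. First, the five-term sequence for the extension $1 \to \pi^{(2)} \to K_{g,1} \to K_g \to 1$, combined with Dimca-Papadima's theorem from \cite{dimcapapa12} that $H_1(K_g,\QQ)$ is finite-dimensional for $g \geq 4$, shows that $H_1(K_{g,1},\QQ)$ is finite-dimensional. Next, let $L$ denote the image of $H_1(\pi^{(3)},\QQ)_{\cLL_2(\pi)}$ in $H_1(\pi^{(2)},\QQ)$ under the map from the five-term sequence; the kernel of $V_3 \to V_2$ then fits in an exact sequence whose terms are a finite-dimensional contribution from $H_2(\cLL_2(\pi),\QQ)$ together with a quotient of $H_1(K_{g,1}, H_1(\cLL_2(\pi),\QQ))$ coming from the long exact sequence in $K_{g,1}$-homology applied to $0 \to L \to H_1(\pi^{(2)},\QQ) \to H_1(\cLL_2(\pi),\QQ) \to 0$. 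Since $K_{g,1} = K_{g,1}(2)$ acts trivially on $\cLL_2(\pi)$, this latter group equals $H_1(K_{g,1},\QQ) \otimes H_1(\cLL_2(\pi),\QQ)$, which is finite-dimensional by the first step. Hence both the kernel and image of $V_3 \to V_2$ are finite-dimensional, so $V_3$ is finite-dimensional.

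The hard part is the backward direction: extending the kernel analysis above to $n \geq 3$ would require controlling $H_1(K_{g,1}, H_1(\cLL_n(\pi),\QQ))$ when the $K_{g,1}$-action on $\cLL_n(\pi)$ is no longer trivial, which is delicate. Fortunately, the $n=2$ analysis is already sufficient for the required contrapositive.
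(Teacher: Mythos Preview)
Your forward direction is exactly the paper's argument: both extract the tail $V_{n+1}\to V_n\to \cLL_n(\pi)\otimes\QQ\to 0$ from the five-term sequence of $1\to\pi^{(n+1)}\to\pi^{(n)}\to\cLL_n(\pi)\to 1$ after passing to $K_{g,1}$-coinvariants (using $\pi^{(n)}\subset K_{g,1}$ via the push map, and that $K_{g,1}\subset T_{g,1}$ acts trivially on each graded piece $\cLL_n(\pi)$), and then induct on $n$.

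The backward direction is where the two diverge. The paper writes only ``the result now follows by induction,'' but the displayed exact sequence yields only the implication $V_{n+1}$ finite $\Rightarrow V_n$ finite, not its converse; so the paper does not in fact supply an argument for the reverse implication, and indeed only the forward direction is ever invoked (in the proof of Theorem~\ref{theorem1}). Your attempt to furnish one therefore goes beyond the paper. The kernel analysis you sketch for $V_3\to V_2$ is essentially sound---the kernel of $V_3\to L_{K_{g,1}}$ is a quotient of the finite-dimensional $H_2(\cLL_2(\pi),\QQ)$, and the kernel of $L_{K_{g,1}}\to V_2$ is a quotient of $H_1(K_{g,1},\QQ)\otimes(\cLL_2(\pi)\otimes\QQ)$---but the last step needs $H_1(K_{g,1},\QQ)$ to be finite-dimensional, which you obtain from Dimca--Papadima only under the additional hypothesis $g\geq 4$. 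The lemma carries no such genus restriction, so your backward argument has a genuine gap for $g=2,3$. In summary: the forward half matches the paper and is correct; the backward half is more than the paper provides, but as written it imports an assumption the statement does not have.
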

\begin{proof}
Consider the exact sequence
\begin{equation*}
H_1(\pi^{(n+1)},\QQ)_{\pi^{(n)}}\rightarrow H_1(\pi^{(n)},\QQ)\rightarrow \cLL_n(\pi)\otimes \QQ\rightarrow 0.
\end{equation*}
Because the operation of taking $K_{g,1}$-coinvariants is right-exact, there is an exact sequence
\begin{equation*}
H_1(\pi^{(n+1)},\QQ)_{K_{g,1}}\rightarrow H_1(\pi^{(n)},\QQ)_{K_{g,1}}\rightarrow \cLL_n(\pi)\otimes \QQ\rightarrow 0,
\end{equation*}
as $K_{g,1}$ acts trivially on $\cLL_n(\pi)$. Since $\cLL_n(\pi)\otimes \QQ$ is a finite-dimensional vector space, the result now follows by induction.
\end{proof}
\begin{proof}[Proof of Theorem \ref{theorem1}]
Assume that $A_{K_{g,1}}$ is infinite-dimensional. Then by Lemma \ref{lemma3.2} so is $H_1(\pi^{(n)},\QQ)_{K_{g,1}}$ for each $n\geq 2$. Since $K_{g,1}(n)$ is a subgroup of $K_{g,1}$, there is a surjection
$H_1(\pi^{(n)},\QQ)_{K_{g,1}(n)}\rightarrow H_1(\pi^{(n)},\QQ)_{K_{g,1}}$.
As the latter space is infinite-dimensional, this implies that $H_1(\pi^{(n)},\QQ)_{K_{g,1}(n)}$ is as well. 
The 5-term exact sequence of the extension (\ref{5}) has a segment
\begin{equation*}
H_2(K_g(n),\QQ)\rightarrow H_1(\pi^{(2)},\QQ)_{K_{g,1}}\rightarrow H_1(K_{g,1}(n),\QQ).
\end{equation*}
Therefore, at least one of $H_2(K_g(n),\QQ)$ and $H_1(K_{g,1}(n),\QQ)$ is infinite-dimensional. 
\end{proof}


\vspace{.15in}
\noindent Kevin Kordek\\
Department of Mathematics\\
Mailstop 3368\\
Texas A$\&$M University\\
College Station, TX 77843-3368\\
E-mail: \sf{kordek@math.tamu.edu}
 
\end{document}